\newcommand*{\rom}[1]{\expandafter\@slowromancap\romannumeral #1@}
\newcommand{\R}{\mathbb{R}}
\DeclareMathOperator{\Tr}{Tr}
\definecolor{lightgray}{gray}{0.9}
\newtheorem{remark}{Remark}
\pgfplotsset{compat=1.17}
\DeclareMathAlphabet{\mathpzc}{OT1}{pzc}{m}{it}
\title{Trace Ratio vs Ratio Trace Methods for Multidimensional Dimensionality Reduction}
\author{ A. Zahir\thanks{The UM6P Vanguard Center, Mohammed VI Polytechnic University, Green 
City, Morocco.} \and F. Dufrenois \thanks{Université du Littoral Côte d'Opale, LISIC, 50 rue F. Buisson, 62228 Calais-Cedex, France.}
\and K. Jbilou\thanks{Université du Littoral Côte d'Opale, LMPA, 50 rue F. Buisson, 62228 Calais-Cedex, France.}
\and A. Ratnani\footnotemark[1]}
\begin{document}
\maketitle

\begin{abstract}
We propose a higher-order dimensionality reduction framework based on the Trace Ratio (TR) optimization problem. We establish conditions for existence and uniqueness of solutions and clarify the theoretical connection between the Trace Ratio and its surrogate, the Ratio Trace (RT) formulation. Building on these foundations, we design a Newton-type iterative algorithm that operates directly in the tensor domain via the Einstein product, avoiding data flattening and preserving multi-dimensional structure.

This approach extends classical Linear Discriminant Analysis (LDA) to higher-order tensors, offering a natural generalization of trace-based dimensionality reduction from matrices to tensors. Numerical experiments on several benchmark datasets confirm the efficiency and robustness of the proposed methods, showing consistent improvements over existing matrix- and tensor-based techniques.
\end{abstract}

\begin{keywords}
Tensor, Dimensionality reduction, Einstein product, Multi-dimensional data, Ratio Trace problem, Trace Ratio problem.
\end{keywords}

\section{Introduction}\label{sec:intro}
Many applications in machine learning, signal and image processing, and pattern recognition \cite{fukunaga2013introduction} involve the analysis of data in high-dimensional spaces. However, operating in such spaces poses significant challenges, including increased computational complexity, greater storage demands. To mitigate these issues and facilitate more efficient model training, researchers frequently turn to dimensionality reduction (DR) techniques. Dimensionality reduction alleviates computational and storage burdens. It also improves interpretability and generalization. Empirical evidence suggests that many high-dimensional datasets have an underlying low-dimensional structure, often well-approximated in manifolds or other compact representations. This observation motivates the search for effective methods that can uncover such intrinsic structures, enabling the construction of lower-dimensional embeddings that retain the key characteristics of the original data. By doing so, we enhance learning efficiency and maintain predictive accuracy while gaining deeper insights into the fundamental patterns in the data.
\\
Dimensionality reduction encompasses a variety of techniques, each tailored to specific data characteristics and analytical goals. These methods can be categorized into linear and nonlinear approaches. 
Linear dimensionality reduction methods operate under the assumption that the data lies within a linear subspace of the high-dimensional space. A classic example is Principal Component Analysis (PCA), which seeks to identify directions (principal components) that capture the maximum variance in the data. These methods are typically computationally efficient, mathematically well-founded, and easy to interpret, making them widely applicable across domains. However, linear techniques may fall short when applied to datasets characterized by nonlinear or manifold structures, where important relationships are not aligned with linear subspaces. 
To address this limitation, nonlinear dimensionality reduction methods have been developed to capture nonlinear patterns in the data. Techniques such as t-Distributed Stochastic Neighbor Embedding (t-SNE) and Isomap aim to preserve local neighborhood relationships and uncover the underlying structure of the data manifold. These methods are particularly well-suited for data visualization and exploratory analysis in high dimensions. Nonetheless, they tend to be computationally intensive and often require careful parameter tuning to yield meaningful results, potentially limiting their scalability in large datasets.
\\
Dimensionality reduction methods can also be classified based on the availability of label information, distinguishing between supervised and unsupervised approaches. Supervised methods exploit labeled data to enhance class separability, improving performance in tasks such as classification. In contrast, unsupervised methods aim to uncover inherent structures in the data without relying on labels. 
In recent years, several efforts have been made to extend classical dimensionality reduction techniques, such as Linear Discriminant Analysis (LDA) and Principal Component Analysis (PCA), to settings involving multi-dimensional (tensor) data. Many of these approaches are based on the tensor t-product, which is inherently limited to third-order tensors, restricting their applicability to higher-order data structures \cite{el2022tensor,kilmer2008third,wang2007trace}. Although, a generalization of this product to higher-order tensors exists. It is not widely adopted in the literature.\\ 
To address this limitation, recent work by \cite{zahir2024higher} introduces a unified framework that generalizes both linear and nonlinear dimensionality reduction methods—including PCA, ONPP, NPP, and OLPP—using the Einstein product, which naturally supports higher-order tensors. This framework encompasses supervised extensions, kernel-based formulations, and out-of-sample generalizations for nonlinear methods, offering a flexible and scalable solution for high-dimensional data analysis.
\\
An earlier contribution \cite{dufrenois2023multilinear} focuses specifically on the Linear Discriminant Analysis (LDA), introducing two tensor-based extensions under the name Multilinear Discriminant Analysis (MLDA). These variants leverage distinct tensor operations, namely the L-product family and the n-mode product.
Some recent optimization techniques are based on the \textit{Trace Ratio} (TR) problem and numerous methods have been proposed to solve this problem which is known to be nonconvex and challenging to solve. To address this challenge, a related—but not equivalent—formulation known as the \textit{Ratio Trace} (RT) problem has been commonly used in the literature as a simplification. 
However, these approaches typically require flattening multi-dimensional data into matrices, which can be problematic. This transformation may lead to loss of the intrinsic structure of the original data—an aspect that is often crucial for achieving accurate and meaningful results.
\\
Tensor-based methods have recently gained significant attention, as they offer a natural generalization of many matrix-based frameworks. Among the most widely used tensor products are the t-product, introduced in \cite{kilmer2008third}, and the Einstein product \cite{Brazell2013}. These approaches have been successfully applied across a range of domains, including denoising, tensor completion, classification, clustering \cite{zahir2025multilinear}, and \textit{dimensionality reduction} (DR).

The main contributions of this section are:
\begin{itemize}
\item We formulate higher-order Trace Ratio (TR) and Ratio Trace (RT) criteria using the Einstein product, preserving tensor structure beyond third order.
\item We establish existence and uniqueness results for the TR solution and derive necessary optimality conditions.
\item We design an efficient Newton-type iterative algorithm, together with a regularized variant, and clarify the connection between TR and least-squares regression.
\item We propose a tensor extension of Linear Discriminant Analysis (MDA$_e$), and evaluate its performance on benchmark datasets.
\end{itemize}

The paper is organized as follows. Section~\ref{sec:preliminaries} recalls multilinear algebra and the Einstein product. Section~\ref{sec:trace_ratio} develops the TR and RT problems, their relation, and the proposed algorithm. Section~\ref{sec:proposed_methods} introduces the multilinear discriminant analysis framework. Section~\ref{sec:experiments} reports numerical results, and Section~\ref{sec:conclusion} concludes with future directions.

\section{Preliminaries}\label{sec:preliminaries}
Let $\mathcal{A}\in \R^{I_1\times \cdots \times I_M}$ be an $M$th-order tensor, with entry $\mathcal{A}_{i_1 \ldots i_M}$ at position $(i_1,\ldots,i_M)$. The $i$-th frontal slice of $\mathcal{A}$, denoted by $\mathcal{A}^{(i)}$, is the tensor obtained by fixing the last index to $i$. 

A tensor $\mathcal{A} \in \R^{I_1\times \cdots \times I_M \times J_1\times \cdots \times J_N}$ is called \emph{even} if it verifies $M=N$, and \emph{square} if $I_k=J_k$ for all $k=1,\ldots,M$ \cite{qi2017tensor}. Its transpose is defined by
\[
(\mathcal{A}^T )_{j_1\ldots j_M i_1\ldots i_M}=\mathcal{A}_{i_1\ldots i_M j_1\ldots j_M},
\]
and $\mathcal{A}$ is symmetric if $\mathcal{A}^T=\mathcal{A}$. A diagonal tensor has all zero entries except those with repeated indices $(i_1,\ldots,i_M,i_1,\ldots,i_M)$. If these entries are one, it is the identity tensor $\mathcal{I}_M$. The symmetric part is defined by $\operatorname{sym}(\mathcal{A})=\tfrac{1}{2}(\mathcal{A}+\mathcal{A}^T)$. 
\medskip
\begin{definition}[m-mode product]
Let $\mathcal{A} \in \R^{I_1 \times \cdots \times I_M}$ and $Z \in \R^{J\times I_m}$. 
The $m$-mode product is
\[
(\mathcal{A} \times_m Z)_{i_1 \ldots i_{m-1} j i_{m+1} \ldots i_M}
=\sum_{i_m=1}^{I_m} Z_{j i_m} \,\mathcal{A}_{i_1 \ldots i_m \ldots i_M}.
\]
\end{definition}

\begin{definition}[Einstein product {\cite{Brazell2013}}]
Let $\mathcal{A} \in \R^{I_1 \times \cdots \times I_M \times K_1 \times \cdots \times K_N}$ and\\ $\mathcal{Z} \in \R^{K_1 \times \cdots \times K_N \times J_1 \times \cdots \times J_M}$. The Einstein product of $\mathcal{A}$ and $\mathcal{Z}$ is defined as
\[
\left(\mathcal{A} *_N \mathcal{Z}\right)_{i_1 \ldots i_M j_1 \ldots j_M}
=\sum_{k_1 \ldots k_N} \mathcal{A}_{i_1 \ldots i_M k_1 \ldots k_N} \,\mathcal{Z}_{k_1 \ldots k_N j_1 \ldots j_M}.
\]
\end{definition}
\noindent
When $\mathcal{A} \in \R^{I_1 \times \cdots \times I_M \times J}$ and $Z \in \R^{J \times K}$, the Einstein product over one mode coincides with the $m$-mode product:
$\mathcal{A} *_1 Z=\mathcal{A} \times_{M+1} Z^T$.

\begin{definition}[Inner product and norm]
Let $\mathcal{A}, \mathcal{Z} \in \R^{I_1 \times \cdots \times I_M}$. Then
\[
\langle \mathcal{A}, \mathcal{Z}\rangle=\Tr (\mathcal{A} *_M \mathcal{Z}^T), \qquad
\|\mathcal{A}\|_{F}=\sqrt{\langle \mathcal{A}, \mathcal{A}\rangle}.
\]
\end{definition}
\medskip
\begin{definition}[Invertibility and definiteness]
A square $2M$th-order tensor $\mathcal{A}$ is invertible if there exists $\mathcal{A}^{-1}$ such that
$\mathcal{A} *_M \mathcal{A}^{-1}=\mathcal{A}^{-1} *_M \mathcal{A}=\mathcal{I}_M$. 
It is unitary if $\mathcal{A}^T *_M \mathcal{A}=\mathcal{I}_M$, 
positive semi-definite if $\langle \mathcal{X},\mathcal{A} *_M \mathcal{X}\rangle \geq 0$ for all $\mathcal{X}$, 
and positive definite if the inequality is strict for $\mathcal{X}\neq 0$.
\end{definition}
\medskip
\begin{definition}[Eigenvalue problems \cite{wang2022generalized}]
Let $\mathcal{A},\mathcal{B} \in \R^{I_1 \times \cdots \times I_M \times I_1 \times \cdots \times I_M}$ two symmetric tensors. We define
\begin{itemize}
\item \emph{Eigenvalue problem (EVP)}: $\mathcal{A} *_M \mathcal{X}=\lambda \mathcal{X}$.
\item \emph{Generalized eigenvalue problem (GEVP)}: $\mathcal{A} *_M \mathcal{X}=\lambda \mathcal{B} *_M \mathcal{X}$.
\end{itemize}
\end{definition}
\medskip
\begin{definition}[Rank and range space]
Let $\mathcal{A} \in \R^{I_1 \times \ldots \times I_M \times J_1 \times \ldots \times J_M}$,
the range space of $\mathcal{A}$ is defined as
$\mathcal{R}(\mathcal{A}):=\{\mathcal{X} *_M \mathcal{Z}, \; \mathcal{Z} \in \R^{J_1 \times \ldots \times J_M}\}$, and the rank of $\mathcal{A}$ is $\operatorname{rank}(\mathcal{A})=\dim(\mathcal{R}(\mathcal{A}))$.

\end{definition}

\begin{lemma}\label{prop:symmetric_XMXt}
\begin{enumerate}[label=\arabic*.]
\item A symmetric $2M$th-order tensor $\mathcal{X}$ is positive semi-definite iff there exists $\mathcal{B}$ such that $\mathcal{X}=\mathcal{B} *_M \mathcal{B}^T$; it is positive definite iff $\mathcal{B}$ is invertible.
\item If $M$ is a positive semi-definite matrix, then $\mathcal{X} \times_{M+1} M *_1 \mathcal{X}^T$ is also positive semi-definite.
\item The eigenvalues of a symmetric tensor are real; they are nonnegative if the tensor is semi-definite, and strictly positive if it is positive definite.
\end{enumerate}
\end{lemma}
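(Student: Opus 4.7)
The plan is to reduce all three items to classical matrix linear algebra via the standard matricization isomorphism
\[
\phi: \R^{I_1\times\cdots\times I_p\times J_1\times\cdots\times J_q} \;\to\; \R^{(I_1\cdots I_p)\times(J_1\cdots J_q)}
\]
obtained by collapsing the first $p$ and the last $q$ indices into single indices in lexicographic order. I would first verify by direct index manipulation from the definitions of Section~\ref{sec:preliminaries} the three compatibility properties $\phi(\mathcal{A}*_N\mathcal{B})=\phi(\mathcal{A})\phi(\mathcal{B})$, $\phi(\mathcal{A}^T)=\phi(\mathcal{A})^T$, and $\langle\mathcal{A},\mathcal{B}\rangle=\langle\phi(\mathcal{A}),\phi(\mathcal{B})\rangle_F$. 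From these it follows that symmetry, positive (semi-)definiteness, invertibility, and the tensor EVP $\mathcal{A}*_M\mathcal{X}=\lambda\mathcal{X}$ transport bijectively to their matrix counterparts.

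Granted this dictionary, each item reduces to a known matrix fact. For item (1), the forward direction is the matrix spectral theorem applied to the symmetric PSD matrix $\phi(\mathcal{X})$, which yields $\phi(\mathcal{X})=BB^T$ with $B$ square invertible iff $\phi(\mathcal{X})$ is PD; setting $\mathcal{B}:=\phi^{-1}(B)$ and invoking multiplicativity of $\phi$ produces $\mathcal{X}=\mathcal{B}*_M\mathcal{B}^T$. The converse is the elementary chain
\[
\langle\mathcal{Y},\mathcal{X}*_M\mathcal{Y}\rangle=\langle\mathcal{Y},\mathcal{B}*_M(\mathcal{B}^T*_M\mathcal{Y})\rangle=\|\mathcal{B}^T*_M\mathcal{Y}\|_F^2\ge 0,
\]
with strict positivity exactly when $\mathcal{B}^T$ is injective. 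For item (2), I would use the preliminary identity $\mathcal{X}*_1 Z=\mathcal{X}\times_{M+1}Z^T$ together with the symmetry of the PSD matrix $M$ to rewrite $\mathcal{X}\times_{M+1}M=\mathcal{X}*_1 M$; then, factoring $M=LL^T$ and applying the transpose rule $(\mathcal{A}*_N\mathcal{B})^T=\mathcal{B}^T*_N\mathcal{A}^T$, I obtain
\[
\mathcal{X}\times_{M+1}M*_1\mathcal{X}^T=(\mathcal{X}*_1 L)*_1(\mathcal{X}*_1 L)^T,
\]
which is PSD by the forward direction of (1). For item (3), passing the EVP through $\phi$ yields a symmetric matrix eigenproblem; the classical spectral theorem delivers realness, while non-negativity (resp.\ strict positivity) of the eigenvalues follows from the corresponding matrix property of $\phi(\mathcal{A})$.

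The main obstacle is the initial setup: writing $\phi$ down explicitly and patiently checking its compatibility with the Einstein product, the tensor transpose, the mode-$m$ product, and the Frobenius inner product. Once this bookkeeping is in place, the three statements collapse into classical matrix facts. A minor notational wrinkle appears in (2), where $\mathcal{X}*_1 L$ is not square of even order, so the factorization conclusion of (1) must be interpreted with contraction index $1$ rather than $M$; this is a typographical rather than substantive issue, and the corresponding PSD computation is identical to the one used in the reverse direction of (1).
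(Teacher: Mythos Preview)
The paper does not supply a proof of this lemma; it is stated in the preliminaries without justification, immediately followed by the (also unproved) Einstein Tensor Spectral Theorem cited from~\cite{zahir2024higher}. There is therefore no ``paper's own proof'' to compare against.

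Your proposal is correct and is exactly the natural argument. The matricization isomorphism $\phi$ you describe is the one underlying the Einstein-product framework the paper imports from~\cite{Brazell2013}, and the three compatibility identities you plan to verify (multiplicativity with $*_N$, compatibility with transpose, preservation of the Frobenius inner product) are standard index computations. Once those are in hand, items~(1) and~(3) are immediate transfers of the matrix spectral theorem, and your treatment of the converse in~(1) via $\langle\mathcal{Y},\mathcal{B}*_M\mathcal{B}^T*_M\mathcal{Y}\rangle=\|\mathcal{B}^T*_M\mathcal{Y}\|_F^2$ is clean. For item~(2) your rewriting $\mathcal{X}\times_{M+1}M=\mathcal{X}*_1 M$ (using symmetry of $M$ and the identity $\mathcal{A}*_1 Z=\mathcal{A}\times_{M+1}Z^T$ recorded in the preliminaries) followed by the factorization $M=LL^T$ is correct, and your remark that the resulting factor $\mathcal{X}*_1 L$ is not square---so the contraction is $*_1$ rather than $*_M$---is a valid observation; the PSD check $\langle\mathcal{Y},(\mathcal{X}*_1 L)*_1(\mathcal{X}*_1 L)^T*_M\mathcal{Y}\rangle=\|(\mathcal{X}*_1 L)^T*_M\mathcal{Y}\|_F^2\ge 0$ goes through unchanged.
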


\begin{theorem}[Einstein Tensor Spectral Theorem {\cite{zahir2024higher}}]
If $\mathcal{A}$ is a symmetric $2M$th-order tensor. Then there exist a unitary tensor $\mathcal{Q}$ and a diagonal tensor $\mathcal{D}$ such that $\mathcal{A}=\mathcal{Q} *_M \mathcal{D} *_M \mathcal{Q}^T$.
\end{theorem}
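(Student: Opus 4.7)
The natural strategy is to transport the problem to a matrix problem via an unfolding, apply the classical real spectral theorem, and fold the result back. Concretely, I would introduce the bijection
\[
\phi : \R^{I_1\times\cdots\times I_M\times I_1\times\cdots\times I_M}\longrightarrow\R^{I\times I},\qquad I=\prod_{k=1}^{M}I_k,
\]
that sends the multi-indices $(i_1,\ldots,i_M)$ and $(j_1,\ldots,j_M)$ to single row and column indices through a fixed lexicographic numbering. The core of the argument is to verify that $\phi$ is an algebra isomorphism compatible with every structural notion we need.

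First I would check the four functorial properties: (i) $\phi(\mathcal{A}*_M\mathcal{B})=\phi(\mathcal{A})\phi(\mathcal{B})$, which is immediate from matching the summation in the Einstein product with standard matrix multiplication; (ii) $\phi(\mathcal{A}^T)=\phi(\mathcal{A})^T$, which follows because the swap of the two index blocks in the tensor transpose corresponds exactly to transposing row and column multi-indices; (iii) $\phi(\mathcal{I}_M)=I_I$, the identity matrix; and (iv) $\mathcal{D}$ is diagonal in the sense of the paper (nonzero only at indices $(i_1,\ldots,i_M,i_1,\ldots,i_M)$) if and only if $\phi(\mathcal{D})$ is diagonal. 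From (i)--(iii) I would deduce that unitary tensors correspond bijectively to orthogonal matrices, and symmetric tensors to symmetric matrices.

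With the dictionary in place, the rest is short. Since $\mathcal{A}$ is symmetric, the matrix $A:=\phi(\mathcal{A})\in\R^{I\times I}$ is symmetric, so the classical real spectral theorem yields an orthogonal $Q\in\R^{I\times I}$ and a diagonal $D\in\R^{I\times I}$ with $A=QDQ^T$. Setting $\mathcal{Q}=\phi^{-1}(Q)$ and $\mathcal{D}=\phi^{-1}(D)$, items (i)--(iv) give $\mathcal{Q}^T*_M\mathcal{Q}=\mathcal{I}_M$, $\mathcal{D}$ diagonal in the Einstein sense, and
\[
\mathcal{A}=\phi^{-1}(QDQ^T)=\mathcal{Q}*_M\mathcal{D}*_M\mathcal{Q}^T,
\]
which is the required factorization.

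The only genuinely delicate point is the compatibility between the tensor transpose and the matrix transpose under $\phi$: one must choose the same lexicographic ordering for the two index blocks so that swapping the blocks at the tensor level coincides with transposing at the matrix level. Once this convention is fixed, everything else is bookkeeping, and no further analytic content beyond the matrix spectral theorem is needed. (Alternatively, one could prove the result intrinsically by induction on the dimension, extracting an eigenpair from Lemma~\ref{prop:symmetric_XMXt} and deflating, but the unfolding route is both shorter and consistent with the framework already developed in the preliminaries.)
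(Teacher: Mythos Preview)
Your argument is correct: the unfolding isomorphism $\phi$ (essentially the map introduced in \cite{Brazell2013}) respects Einstein products, transposes, identities, and diagonality, so the tensor spectral theorem reduces to the real symmetric matrix spectral theorem exactly as you describe. Note, however, that the paper does not supply its own proof of this statement; it is quoted from \cite{zahir2024higher} and stated without proof. Your unfolding route is the standard one and is almost certainly what the cited reference does as well, so there is no meaningful divergence to discuss.
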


As in the matrix setting, the generalized eigenvalue problem
\[
\mathcal{A} *_M \mathcal{X}=\lambda \,\mathcal{M} *_M \mathcal{X}, \qquad \mathcal{M}\ \text{invertible},
\]
is equivalent to the standard eigenvalue problem $\widehat{\mathcal{A}} *_M \widehat{\mathcal{X}}=\lambda \widehat{\mathcal{X}}$, with $\widehat{\mathcal{A}}=\mathcal{A} *_M \mathcal{M}^{-1}$ and $\widehat{\mathcal{X}}=\mathcal{M} *_M \mathcal{X}$.

\section{The Trace Ratio and Ratio Trace Problems}\label{sec:trace_ratio}

We now introduce the Trace Ratio (TR) problem in the tensor setting, establish its properties, and present an iterative algorithm. We then discuss its surrogate, the Ratio Trace (RT) problem, which is easier to solve but not equivalent.

\subsection{Trace Ratio problem}

The tensor TR problem is defined as
\begin{equation}
\label{eq:trace_ratio}
\max_{\substack{\mathcal{P} \in \R^{I_1 \times \cdots \times I_M \times d}\\ \mathcal{P}^T *_M \mathcal{C} *_M \mathcal{P}=\mathcal{I}}} 
\left \{ \mathcal{J}_{tr}(\mathcal{P}):=\frac{\Tr\left(\mathcal{P}^T *_M \mathcal{A} *_M \mathcal{P}\right)}{\Tr\left(\mathcal{P}^T *_M \mathcal{B} *_M \mathcal{P}\right)} \right \},
\end{equation}
where $\mathcal{A}$ is symmetric, and $\mathcal{B},\mathcal{C}$ are symmetric positive definite tensors. The variable $\mathcal{P}$ is the projection tensor. 

Without loss of generality, we can assume $\mathcal{C}=\mathcal{I}$, since one may replace
\[
\hat{\mathcal{A}}=\hat{\mathcal{C}} *_M \mathcal{A} *_M \hat{\mathcal{C}}^T, 
\quad 
\hat{\mathcal{B}}=\hat{\mathcal{C}} *_M \mathcal{B} *_M \hat{\mathcal{C}}^T,
\quad 
\mathcal{C}=\hat{\mathcal{C}}^T *_M \hat{\mathcal{C}},
\]
preserving symmetry. Thus the problem is a natural generalization of the classical matrix TR problem.\\
First, we start by proving the existence of the solution, then implement the algorithm before giving the necessary condition for optimality.

\subsubsection{Existence and uniqueness of the solution}
In this part, we use properties seen in the first part to show the existence and uniqueness of the solution of the TR problem.
\medskip

The following lemma shows the case when the denominator of the TR problem is nonnegative where $\mathcal{B}$ is only positive semi-definite.
\begin{lemma}\label{lem:trace_ratio_positive}
Let $\mathcal{B}$ positive semi-definite and assume $\dim \mathcal{N}(\mathcal{B})< d$. Then we have \\$\operatorname{Tr}\!\left( \mathcal{P}^T *_M \mathcal{B} *_M \mathcal{P}\right)>0$ for all unitary $\mathcal{P}\in\R^{I_1 \times \ldots \times I_M \times d}$.
\end{lemma}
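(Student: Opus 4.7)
The plan is to reduce the claim to a norm-nonnegativity argument via a symmetric factorization of $\mathcal{B}$, then extract the strict positivity from the hypothesis on $\dim \mathcal{N}(\mathcal{B})$ using the semi-unitarity constraint on $\mathcal{P}$.

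First, since $\mathcal{B}$ is symmetric positive semi-definite, Lemma~\ref{prop:symmetric_XMXt}(1) gives a factorization $\mathcal{B}=\mathcal{Y}*_M \mathcal{Y}^T$ for some tensor $\mathcal{Y}$. Substituting into the trace and using the cyclic property of the trace together with the definition of the Frobenius norm via the Einstein product, I rewrite
\[
\operatorname{Tr}\!\left(\mathcal{P}^T *_M \mathcal{B} *_M \mathcal{P}\right)
=\operatorname{Tr}\!\left((\mathcal{Y}^T *_M \mathcal{P})^T *_M (\mathcal{Y}^T *_M \mathcal{P})\right)
=\|\mathcal{Y}^T *_M \mathcal{P}\|_F^2 \;\geq\; 0.
\]
This immediately gives the nonnegativity; the whole task is then to rule out equality under the stated dimension hypothesis.

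Next I argue by contradiction: suppose the trace vanishes. Then $\mathcal{Y}^T *_M \mathcal{P}=0$, so every ``column'' (frontal slice indexed by the last mode of size $d$) of $\mathcal{P}$ lies in $\mathcal{N}(\mathcal{Y}^T)$. A short verification shows $\mathcal{N}(\mathcal{Y}^T)=\mathcal{N}(\mathcal{B})$: if $\mathcal{B}*_M\mathcal{X}=0$ then $\|\mathcal{Y}^T*_M\mathcal{X}\|_F^2=\langle\mathcal{X},\mathcal{B}*_M\mathcal{X}\rangle=0$, giving one inclusion; the reverse is immediate since $\mathcal{B}=\mathcal{Y}*_M\mathcal{Y}^T$. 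Consequently $\mathcal{R}(\mathcal{P})\subseteq \mathcal{N}(\mathcal{B})$.

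Finally, the unitarity constraint $\mathcal{P}^T *_M \mathcal{P}=\mathcal{I}_d$ forces the $d$ columns of $\mathcal{P}$ to be linearly independent, so $\operatorname{rank}(\mathcal{P})=d$ and hence $\dim \mathcal{R}(\mathcal{P})=d$. Combined with the previous inclusion, this yields $d \leq \dim \mathcal{N}(\mathcal{B})$, contradicting the hypothesis $\dim \mathcal{N}(\mathcal{B}) < d$. Therefore the trace must be strictly positive. The main obstacle, such as it is, is purely bookkeeping: making sure that the Einstein-product ``columns'' of $\mathcal{P}$ and the tensor null-space $\mathcal{N}(\mathcal{B})$ interact the same way they do in the matrix case, which is handled by the identification $\mathcal{N}(\mathcal{Y}^T)=\mathcal{N}(\mathcal{B})$ above.
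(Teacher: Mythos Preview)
Your proof is correct and rests on the same core idea as the paper's: $d$ orthonormal ``columns'' of $\mathcal{P}$ cannot all lie in a subspace of dimension less than $d$, namely $\mathcal{N}(\mathcal{B})$. The packaging differs slightly. The paper uses the spectral decomposition $\mathcal{B}=\mathcal{V}^T *_M \mathcal{D} *_M \mathcal{V}$, expands the trace as an explicit double sum $\sum_{j}\sum_{i_1,\ldots,i_n}\mathcal{D}_{i_1,\ldots,i_n,i_1,\ldots,i_n}|\mathcal{Q}^{(j)}_{i_1,\ldots,i_n}|^2$ with $\mathcal{Q}=\mathcal{V}*_M\mathcal{P}$, and argues at the index level that some positive diagonal entry of $\mathcal{D}$ must meet a nonzero entry of $\mathcal{Q}$. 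You instead invoke the general PSD factorization $\mathcal{B}=\mathcal{Y}*_M\mathcal{Y}^T$ from Lemma~\ref{prop:symmetric_XMXt}, recognize the trace as $\|\mathcal{Y}^T*_M\mathcal{P}\|_F^2$, and run a clean contradiction through $\mathcal{R}(\mathcal{P})\subseteq\mathcal{N}(\mathcal{B})$. Your route avoids the somewhat informal phrasing in the paper's argument (``has at least one sub-tensor that is non-singular'', ``will coincide with one of the norms'') and makes the dimension count explicit; the spectral version, on the other hand, gives a concrete coordinate expression that could be reused for quantitative lower bounds on the trace.
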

\begin{proof}
Let $\mathcal{B}=\mathcal{V}^T *_M \mathcal{D} *_M \mathcal{V}$ be the eigenvalue decomposition, then $$\Tr\left(\mathcal{P}^T *_M \mathcal{B} *_M \mathcal{P}\right)=\Tr\left(\mathcal{Q}^T *_M \mathcal{D} *_M \mathcal{Q}\right)=\sum_{j=1}^d \sum_{i_1,\ldots,i_n} \mathcal{D}_{i_1,\ldots,i_n,i_1,\ldots,i_n} |\mathcal{Q}_{i_1,\ldots,i_n}^{(j)}|^2,$$
with $\mathcal{Q}=\mathcal{V} *_M \mathcal{P}=\left[\mathcal{Q}^{(1)},\mathcal{Q}^{(2)}, \ldots, \mathcal{Q}^{(d)}\right]$ has at least one sub-tensor that is non-singular. Thus, from the equation above, at least one of the nonzero diagonal entries of $\mathcal{D}$ will coincide with one of the norms $|\mathcal{Q}_{i_1,\ldots,i_n}^{(j)}|$, hence, the sum cannot be zero.
\end{proof}
\noindent
The above lemma gives us an appropriate setup for the problem to be well-posed, in this case, the maximum is finite.
\medskip
\begin{proposition}
Under the conditions of Lemma \ref{lem:trace_ratio_positive}, the TR problem has a finite maximum (resp., minimum) value, denoted as $\rho^*$.
\end{proposition}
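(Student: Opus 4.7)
The plan is to invoke the extreme value theorem: show that the feasible set is compact, that the objective $\mathcal{J}_{tr}$ extends to a continuous function on that set, and conclude that the supremum (resp.\ infimum) is attained and hence finite.

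First I would identify the feasible set $\mathcal{S}:=\{\mathcal{P}\in\R^{I_1\times\cdots\times I_M\times d}:\mathcal{P}^T *_M \mathcal{P}=\mathcal{I}\}$ (recall we have reduced to the case $\mathcal{C}=\mathcal{I}$) as a compact subset of $\R^{I_1\times\cdots\times I_M\times d}$. Closedness follows because $\mathcal{S}$ is the preimage of the identity tensor under the continuous polynomial map $\mathcal{P}\mapsto \mathcal{P}^T *_M \mathcal{P}$. Boundedness follows because $\mathcal{P}^T *_M \mathcal{P}=\mathcal{I}$ forces $\|\mathcal{P}\|_F^2=\Tr(\mathcal{P}^T *_M \mathcal{P})=\Tr(\mathcal{I})=d$. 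Together these give compactness of $\mathcal{S}$ by Heine--Borel.

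Next I would verify that $\mathcal{J}_{tr}$ is continuous on $\mathcal{S}$. The numerator $\Tr(\mathcal{P}^T *_M \mathcal{A} *_M \mathcal{P})$ and denominator $\Tr(\mathcal{P}^T *_M \mathcal{B} *_M \mathcal{P})$ are both polynomials in the entries of $\mathcal{P}$, hence continuous everywhere. By Lemma~\ref{lem:trace_ratio_positive}, the denominator is strictly positive at every point of $\mathcal{S}$. Since a continuous strictly-positive function on a compact set is bounded below by a positive constant, there exists $c>0$ such that $\Tr(\mathcal{P}^T *_M \mathcal{B} *_M \mathcal{P})\geq c$ uniformly on $\mathcal{S}$, so the quotient $\mathcal{J}_{tr}$ is well-defined and continuous on $\mathcal{S}$.

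Finally, I would apply the extreme value theorem: a continuous real-valued function on a nonempty compact set attains its maximum and minimum. The feasibility of $\mathcal{S}$ is immediate (for instance, one can construct a unitary tensor by picking $d$ orthonormal sub-tensors using the Einstein tensor spectral theorem), so the TR problem admits a finite optimal value $\rho^\star$, and the same argument applied to $-\mathcal{J}_{tr}$ yields finiteness of the minimum. I expect no serious obstacle here; the only subtle point is ensuring that the pointwise positivity of the denominator granted by Lemma~\ref{lem:trace_ratio_positive} upgrades to a uniform positive lower bound, which is precisely what compactness of $\mathcal{S}$ provides.
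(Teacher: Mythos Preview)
Your proposal is correct and follows essentially the same route as the paper's proof: compactness of the Stiefel-type constraint set, strict positivity of the denominator from Lemma~\ref{lem:trace_ratio_positive}, continuity of the trace quotient, and an appeal to the extreme value theorem. The paper states this in a single sentence, whereas you spell out the Heine--Borel argument and the uniform positive lower bound on the denominator, but the underlying argument is identical.
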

\begin{proof}
The proof is straightforward, using \ref{lem:trace_ratio_positive}, the denominator of the TR is nonnegative, the Stiefel manifold is compact, and with the continuity of the trace, the maximum value is finite and attained.
\end{proof}

\noindent
Consider the pencil as a function of $\rho$ as follows
\begin{equation*}
g(\rho):=\mathcal{A} - \rho \mathcal{B},
\end{equation*}
Given $\rho^*$ that maximizes the TR problem, we have
\begin{equation*}
\dfrac{\Tr\left((\mathcal{P}^T *_M \mathcal{A} *_M \mathcal{P}\right)}{\Tr\left(\mathcal{P}^T *_M \mathcal{B} *_M \mathcal{P}\right)} \leq \rho^*, \forall \mathcal{P}^T *_M \mathcal{P}=\mathcal{I}.
\end{equation*}
\noindent
This allows us to find the necessary condition for the pair $(\mathcal{P}^*,\rho^*)$ that maximizes \ref{eq:trace_ratio}, by the following
\begin{equation} \label{eq:trace_ratio_necessary}
\max_{\mathcal{P}^T *_M \mathcal{P}} 
\Tr\left(\mathcal{P}^T *_M g(\rho^*) *_M \mathcal{P}\right)= \Tr\left(\mathcal{P}^*{^T} *_M g(\rho^*) *_M \mathcal{P}^*\right)=0.
\end{equation}
If $\rho^*$ is the maximum value of the TR problem, then the $d$ largest eigenvalue pencil $g(\rho^*)$ are zeros, and the corresponding set of eigen-tensors characterize $\mathcal{P}^*$. As a consequence, the solution of the TR problem is unique up to unitary transformation.
\subsubsection{Implementation of the iterative algorithm}
Consider the function
\begin{equation*}
f(\rho):= \max_{\mathcal{P}^T *_M \mathcal{P}= \mathcal{I}} \Tr\left(\mathcal{P}^T *_M g(\rho) *_M \mathcal{P}\right).
\end{equation*}
This function is equal to the sum of the $d$ largest eigenvalues of $g(\rho)$.
\begin{lemma}\label{lem:f_properties}
Assume that $\mathcal{B}$ satisfies the condition of Lemma \ref{lem:trace_ratio_positive}. Then the function $f(\rho)$ posseses the following properties:
\begin{enumerate}[label=\arabic*.]
\item $f'(\rho)=-\Tr(\mathcal{P}^T *_M \mathcal{B} *_M \mathcal{P})$.
\item $f(\rho)$ is strictly decreasing and convex.
\item $f(\rho)=0$ if and only if $\rho=\rho^*$.
\end{enumerate}
\end{lemma}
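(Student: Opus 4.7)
The key observation is that, for each fixed feasible $\mathcal{P}$ on the Stiefel manifold, the map $\rho\mapsto\Tr(\mathcal{P}^T *_M g(\rho) *_M \mathcal{P})=\Tr(\mathcal{P}^T*_M\mathcal{A}*_M\mathcal{P})-\rho\,\Tr(\mathcal{P}^T*_M\mathcal{B}*_M\mathcal{P})$ is affine in $\rho$, with slope $-\Tr(\mathcal{P}^T *_M \mathcal{B} *_M \mathcal{P})$. Hence $f(\rho)$ is the pointwise supremum of an equicontinuous family of affine functions indexed by the compact set $\{\mathcal{P}:\mathcal{P}^T*_M\mathcal{P}=\mathcal{I}\}$. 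I will exploit this structure throughout.

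First, convexity (part 2, convex part) is immediate from this representation, since any supremum of affine functions is convex. For the derivative formula (part 1), I would invoke Danskin's (envelope) theorem: because the Stiefel-type feasible set is compact and the integrand is jointly continuous and $C^{1}$ in $\rho$, for any maximizer $\mathcal{P}(\rho)$ one has $f'(\rho)=\partial_\rho\Tr\!\big(\mathcal{P}(\rho)^T *_M g(\rho) *_M \mathcal{P}(\rho)\big)=-\Tr(\mathcal{P}(\rho)^T*_M\mathcal{B}*_M\mathcal{P}(\rho))$. At points where $f$ is not classically differentiable (i.e., where several competing $d$-dimensional invariant subspaces of $g(\rho)$ attain the sum), the identity should be read in the one-sided / subdifferential sense, with every maximizer producing an element of $\partial f(\rho)$. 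Convexity of $f$ then ensures that classical differentiability holds on the complement of a countable set, which is enough for our purposes.

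Next, strict monotonicity (part 2, decreasing part) follows by combining the slope formula of part 1 with Lemma \ref{lem:trace_ratio_positive}: since any feasible $\mathcal{P}$ satisfies $\Tr(\mathcal{P}^T*_M\mathcal{B}*_M\mathcal{P})>0$, every element of $\partial f(\rho)$ is strictly negative, so $f$ is strictly decreasing on $\R$. Finally, for part 3, the direction $\rho=\rho^*\Rightarrow f(\rho)=0$ is exactly the necessary optimality condition \eqref{eq:trace_ratio_necessary} established just above the lemma. The converse is a consequence of strict monotonicity: a strictly decreasing function has at most one zero, so the unique zero of $f$ must be $\rho^*$.

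The main subtle point is the handling of non-smoothness of $f$ caused by eigenvalue coalescence of $g(\rho)$, where the maximizer $\mathcal{P}(\rho)$ fails to be unique and the $d$-th/$(d{+}1)$-th eigenvalues may collide. I expect to resolve this cleanly through the envelope-theorem framework, stating part 1 as an identity valid for any maximizer (equivalently, as an identity in $\partial f$), which is all that is needed to deduce strict monotonicity in part 2 and the uniqueness of the zero in part 3; differentiability in the classical sense then comes for free on a dense set from convexity.
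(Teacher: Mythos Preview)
Your argument is correct, and for parts 2 and 3 it coincides with the paper's: the paper also derives convexity from the affine representation $g(t\rho_1+(1-t)\rho_2)=t\,g(\rho_1)+(1-t)\,g(\rho_2)$ together with the subadditivity of the maximum, obtains strict monotonicity from $f'(\rho)<0$ via Lemma~\ref{lem:trace_ratio_positive}, and deduces the unique zero at $\rho^*$ from strict monotonicity combined with \eqref{eq:trace_ratio_necessary}.

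The genuine difference is in part 1. The paper computes $f'(\rho)$ by direct differentiation: it assumes $\mathcal{P}(\rho)$ is differentiable and diagonalizes $g(\rho)$, expands $\tfrac{d}{d\rho}[\mathcal{P}^T*_M g(\rho)*_M\mathcal{P}]$ via the product rule, and eliminates the cross terms using the constraint $\mathcal{P}^T*_M\mathcal{P}=\mathcal{I}$ (whose derivative forces $\operatorname{Diag}(\mathcal{P}^T*_M\tfrac{d\mathcal{P}}{d\rho})=0$). Your route via Danskin's envelope theorem sidesteps this computation entirely and, more importantly, avoids the unjustified smoothness hypothesis on $\mathcal{P}(\rho)$: at eigenvalue coalescence of $g(\rho)$ the maximizer is nonunique and a smooth selection need not exist, a point the paper glosses over but which you address explicitly through the subdifferential reading. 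The paper's calculation has the merit of being self-contained and of displaying the eigenstructure, whereas your approach is shorter, more robust, and delivers exactly what parts 2--3 require without any regularity assumption on the eigen-tensor map.
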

\begin{proof}

$1.$ Provided that $\mathcal{P}(\rho)$ is differentiable, and diagonalizes $g(\rho)$, i.e., $g(\rho) *_M \mathcal{P}(\rho)=\mathcal{P}(\rho) \times_{M+1} \Delta(\rho)$, then we have
\begin{equation*}
\begin{aligned}
\frac{d}{d \rho} \left[ \mathcal{P}^T *_M g(\rho) *_M \mathcal{P} \right] &= \frac{d }{d \rho} \left[ \mathcal{P}^T *_M \mathcal{A} *_M \mathcal{P}\right] - \frac{d }{d \rho} \left[\rho \mathcal{P}^T *_M \mathcal{B} *_M \mathcal{P} \right]\\
&=2 \operatorname{sym} \left(\frac{d \mathcal{P}^T}{d \rho} *_M \mathcal{A}*_M \mathcal{P}\right) - \frac{d \mathcal{P}^T}{d \rho} *_M \rho \mathcal{B} *_M \mathcal{P}\\
&\quad \quad - \mathcal{P}^T *_M \left( \rho \mathcal{B} *_M \frac{d \mathcal{P}}{d \rho} + \mathcal{B} *_M \mathcal{P} \right)\\
&= 2 \operatorname{sym} \left(\frac{d \mathcal{P}^T}{d \rho} *_M \mathcal{P} \times_{M+1} \Delta(\rho)\right) - \mathcal{P}^T *_M \mathcal{B} *_M \mathcal{P}.
\end{aligned}
\end{equation*}
Computing the derivative of the constraint, we can obtain that $\operatorname{Diag}(\mathcal{P}^T *_M \frac{d\mathcal{P}}{d \rho})=0$, thus, we can compute the derivative of $f$ as follows
\begin{equation*}
\begin{aligned}
f'(\rho) &= \Tr\left[2 \operatorname{sym} \left(\frac{d \mathcal{P}^T}{d \rho} *_M \mathcal{P} \times_{M+1} \Delta(\rho)\right) - \mathcal{P}^T *_M \mathcal{B} *_M \mathcal{P} \right]\\
&= - \Tr(\mathcal{P}^T *_M \mathcal{B} *_M \mathcal{P}),\\
\end{aligned}
\end{equation*}
where we used the fact that the trace of Einstein product of a diagonal tensor with another tensor that has zeros on its diagonals is zero.\\
$2.$ As $\mathcal{B}$ satisfies \eqref{lem:trace_ratio_positive}, we have $f'(\rho) < 0$ for all $\rho$. Thus $f$ is strictly decreasing.\\
Let $t \in [0,1]$, then
\begin{equation*}
g(t\rho_1+(1-t)\rho_2)=\mathcal{A} - (t\rho_1+(1-t)\rho_2) \mathcal{B} = t(\mathcal{A} - \rho_1 \mathcal{B}) + (1-t)(\mathcal{A} - \rho_2 \mathcal{B})= t g(\rho_1) + (1-t) g(\rho_2).
\end{equation*}
Therefore $f(t\rho_1+(1-t)\rho_2) \leq t f(\rho_1) + (1-t) f(\rho_2)$, as the maximum sum of functions is less than the sum of the maximum of functions, Thus $f$ is convex.\\
$3.$ Since $f'$ is strictly decreasing, it is injective, and we have $f'(\rho^*)=0$. This concludes the proof.
\end{proof}

\noindent
We can now express the Newton's method to solve the TR problem that converges globally and fast to the optimal solution thanks to Lemma $\ref{lem:f_properties}$.
\begin{equation}\label{eq:update_rho}
\rho_{k+1}=\rho_k - f(\rho_k)/f'(\rho_k)=\mathcal{J}_{tr}(\mathcal{P}(\rho_k)).
\end{equation}
The following algorithm is the iterative Newton's algorithm to solve the TR problem. The eigenvalue computing can be performed using the Lanczos method based on the Einstein product. As we approach the solution, a more robust method can be used instead.
\begin{algorithm}[h]
\caption{Solve The TR Problem using the iterative algorithm}
\begin{algorithmic}[1]
\Require $\mathcal{A},\mathcal{B}$, $d$ (target dimension).
\State Initialize a unitary $\mathcal{P} \in \R^{I_1 \times I_2 \times \ldots \times I_M \times d}$, and $k=0$.
\State $\rho_k \leftarrow \mathcal{J}_{tr}(\mathcal{P})$.
\While{Not converged}
\State Compute the $d$ largest eigenvalues of $g(\rho_k)$, and the corresponding eigen-tensors to form $\mathcal{P}$.
\State Compute $\rho_{k+1}$ via \eqref{eq:update_rho}.
\State $k=k+1$.
\EndWhile
\State \textbf{return} $\mathcal{P}$ (Projection space).
\end{algorithmic}
\label{alg:TR_iterative}
\end{algorithm}
\subsubsection{Necessary Conditions for Optimality}
The Lagrangian of the RT problem is given by
\begin{equation}
\mathcal{L}(\mathcal{P},\lambda):=\frac{\Tr\left(\mathcal{P}^T *_M \mathcal{A} *_M \mathcal{P}\right)}{\Tr\left(\mathcal{P}^T *_M \mathcal{B} *_M \mathcal{P}\right)}
-\Tr\left( \Lambda \left(\mathcal{P}^T *_M \mathcal{P}-\mathcal{I}\right)\right),
\end{equation}
where $\Lambda$ is the Lagrange multiplier.\\
The derivative of the Lagrangian with respect to $\mathcal{P}$ is given by
\begin{equation*}
\frac{\partial \mathcal{L}}{\partial \mathcal{P}}=\frac{\Tr\left(\mathcal{P}^T *_M \mathcal{A} *_M \mathcal{P}\right) \mathcal{A} *_M \mathcal{P} - \Tr\left(\mathcal{P}^T *_M \mathcal{B} *_M \mathcal{P}\right) \mathcal{B} *_M \mathcal{P} }{\Tr\left(\mathcal{P}^T *_M \mathcal{B} *_M \mathcal{P}\right)^2} -2 \mathcal{P} \times_{M+1} \operatorname{sym}(\Lambda).
\end{equation*}
The optimal solution $\mathcal{P}^*$ and $\Lambda^*$ satisfy the following equation
\begin{equation*}
g(\rho^*) *_M \mathcal{P}^*= \Tr\left(\mathcal{P}^{*^T} *_M \mathcal{B} *_M \mathcal{P}^*\right) \mathcal{P}^* \times_{M+1} \operatorname{sym}(\Lambda^*),
\end{equation*}
with $\rho^*=\mathcal{J}_{tr}(\mathcal{P}^*)$. The eigenvalue decomposition of $\operatorname{sym}(\Lambda^*)$ is $VSV^T$, where we can observe that $\Tr(S)=0$. We can rewrite the above equation as an eigenvalue problem
\begin{equation}
\label{eq:ratio_trace_necessary}
g(\rho^*) *_M \mathcal{Q}^* = \mathcal{Q}^* \times_{M+1} S^*,
\end{equation}
where $\Tr(\mathcal{Q}^{*^T} *_M \mathcal{B} *_M \mathcal{Q}^*) S=S^*$, and $\mathcal{Q}^*=\mathcal{P}^* \times_{M+1} V$ is unitary, since 
$$\mathcal{Q}^{*^T} *_M \mathcal{Q}^*=(\mathcal{P}^* \times_{M+1} V)^T *_M( \mathcal{P}^* \times_{M+1} V)= (V^T *_1 \mathcal{P}^{*^T}) *_M( \mathcal{P}^* \times_{M+1} V)= V^T (\mathcal{P}^{*^T} *_M \mathcal{P}) V.$$
As both $\mathcal{P}^*$ and $V$ are unitary, then $\mathcal{Q}^*$ is unitary.\\
The necessary condition for the maximum of the RT problem for the pair $(\mathcal{Q}^*,\rho^*)$ is the equation \eqref{eq:ratio_trace_necessary}, with zero trace of $S^*$.

\subsection{RT problem}
Next, we go back to the TR problem, which is nonconvex and may have multiple stationary points; see, e.g., the matrix-case analyses in \cite{wang2007trace} and references therein.\\
Similar to the matrix case, the problem can be replaced, by the following non-equivalent problem, called the \textit{Ratio Trace} problem
\begin{equation}
\label{eq:ratio_trace}
\max_{\substack{\mathcal{P} \in \R^{I_1 \times \ldots \times I_M \times d} \\ \mathcal{P}^T *_M \mathcal{P}=\mathcal{I}}} 
\left \{ \mathcal{J}_{rt}(\mathcal{P}):=\Tr\left( \left(\mathcal{P}^T *_M \mathcal{B} *_M \mathcal{P}\right)^{-1}\mathcal{P}^T *_M \mathcal{A} *_M \mathcal{P} \right) \right \}.
\end{equation}
In the next part, we will try to give the mathematical background for the solution of the problem
\subsubsection{Solution of the problem}
The next Theorem \ref{thm:tr_pos_def_2} can be used to solve the TR problem. Although easier to solve, the RT problem is not equivalent to the TR problem and may yield solutions that deviate significantly from the optimal solution.
\begin{theorem}\label{thm:tr_pos_def}
Let $\mathcal{A} \in \R^{I_1 \times \ldots \times I_M \times I_1 \times \ldots \times I_M}$ be a symmetric tensor and $\mathcal{B}$ a positive definite tensor of same size. Then
\[ \min_{\substack{\mathcal{P} \in \R^{I_1 \times \ldots \times I_M \times d}\\ \mathcal{P}^T *_M \mathcal{B} *_M \mathcal{P}=\mathcal{I}}} \Tr\left(\mathcal{P}^T *_M \mathcal{A} *_M \mathcal{P}\right)=\lambda_1+\ldots + \lambda_d, \]
where $\lambda_i, i=1, \ldots, d$ correspond the d-smallest eigenvalues of the pair $\{\mathcal{A}, \mathcal{B}\}$.\\
Moreover $\mathcal{P}_*= [\mathcal{Q}_1, \ldots, \mathcal{Q}_d],$ is the minimizer of the problem, formed by the eigen-tensors of the corresponding $\lambda_i$.
\end{theorem}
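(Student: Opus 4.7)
The plan is to reduce the constrained trace minimization to a standard Rayleigh-type problem on the Stiefel manifold, then invoke the Einstein Tensor Spectral Theorem together with a tensor version of Ky Fan's trace minimization principle. Since $\mathcal{B}$ is symmetric positive definite, Lemma~\ref{prop:symmetric_XMXt} supplies an invertible $\hat{\mathcal{B}}$ with $\mathcal{B} = \hat{\mathcal{B}} *_M \hat{\mathcal{B}}^T$. Setting $\mathcal{Q} := \hat{\mathcal{B}}^T *_M \mathcal{P}$ turns the constraint $\mathcal{P}^T *_M \mathcal{B} *_M \mathcal{P} = \mathcal{I}$ into the Stiefel-type constraint $\mathcal{Q}^T *_M \mathcal{Q} = \mathcal{I}$, and the objective rewrites as $\Tr(\mathcal{Q}^T *_M \tilde{\mathcal{A}} *_M \mathcal{Q})$ with $\tilde{\mathcal{A}} := \hat{\mathcal{B}}^{-1} *_M \mathcal{A} *_M \hat{\mathcal{B}}^{-T}$, still symmetric. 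So the original problem is equivalent to minimizing the trace of $\mathcal{Q}^T *_M \tilde{\mathcal{A}} *_M \mathcal{Q}$ over unitary $\mathcal{Q}$.

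Next I would diagonalize $\tilde{\mathcal{A}}$ via the Einstein Tensor Spectral Theorem as $\tilde{\mathcal{A}} = \mathcal{V} *_M \Delta *_M \mathcal{V}^T$, with $\Delta$ diagonal whose entries are ordered so that $\mu_1 \leq \mu_2 \leq \cdots$. Substituting $\mathcal{W} := \mathcal{V}^T *_M \mathcal{Q}$, which remains unitary because $\mathcal{V}$ is, and expanding gives
\begin{equation*}
\Tr\bigl(\mathcal{W}^T *_M \Delta *_M \mathcal{W}\bigr) \;=\; \sum_{j=1}^{d} \sum_{i_1,\ldots,i_M} \Delta_{i_1 \ldots i_M i_1 \ldots i_M}\, \bigl|\mathcal{W}^{(j)}_{i_1 \ldots i_M}\bigr|^2.
\end{equation*}
The Stiefel constraint forces the weights $w_{\mathbf{i}} := \sum_{j=1}^{d} |\mathcal{W}^{(j)}_{\mathbf{i}}|^2$ to satisfy $0 \leq w_{\mathbf{i}} \leq 1$ and $\sum_{\mathbf{i}} w_{\mathbf{i}} = d$, so the objective is a linear functional in $w$ whose minimum over this box-simplex is attained by placing unit mass on the $d$ indices carrying the smallest diagonal entries of $\Delta$. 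This yields the lower bound $\mu_1 + \cdots + \mu_d$, and the bound is attained by choosing $\mathcal{W}$ whose $d$ frontal slices are the coordinate eigen-tensors of $\Delta$ associated with $\mu_1,\dots,\mu_d$.

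The main obstacle is making the Ky Fan step fully rigorous in the Einstein-product setting: one must justify that the admissible weight vectors $(w_{\mathbf{i}})$ indeed fill the box-simplex, which is the content of Poincar\'e's separation theorem. A clean way to discharge this is to transfer the argument through the canonical matrix unfolding of $2M$th-order Einstein-product tensors, under which $\tilde{\mathcal{A}}$ becomes a symmetric matrix with the same spectrum and unitary tensors correspond to orthogonal matrix frames; then the classical matrix Ky Fan / Poincar\'e inequality applies verbatim.

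Finally, to recover the generalized-eigenvalue statement, any eigen-pair $\tilde{\mathcal{A}} *_M \mathcal{Y}_i = \mu_i \mathcal{Y}_i$ yields $\mathcal{A} *_M (\hat{\mathcal{B}}^{-T} *_M \mathcal{Y}_i) = \mu_i\, \mathcal{B} *_M (\hat{\mathcal{B}}^{-T} *_M \mathcal{Y}_i)$, so the $\mu_i$ coincide with the generalized eigenvalues $\lambda_i$ of the pair $(\mathcal{A}, \mathcal{B})$. Unwinding the substitution shows that the minimizer is $\mathcal{P}_* = \hat{\mathcal{B}}^{-T} *_M \mathcal{V}_{(d)} = [\mathcal{Q}_1, \ldots, \mathcal{Q}_d]$, where $\mathcal{V}_{(d)}$ collects the first $d$ frontal slices of $\mathcal{V}$ and $\mathcal{Q}_1,\ldots,\mathcal{Q}_d$ are the generalized eigen-tensors of $\{\mathcal{A},\mathcal{B}\}$ associated with $\lambda_1,\ldots,\lambda_d$, which matches the claim.
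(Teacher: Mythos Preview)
Your argument is correct and takes a genuinely different route from the paper's. The paper proceeds via the Lagrangian: it forms $\mathcal{L}(\mathcal{P},\Lambda)=\Tr(\mathcal{P}^T *_M \mathcal{A} *_M \mathcal{P})-\Tr(\Lambda^T(\mathcal{P}^T *_M \mathcal{B} *_M \mathcal{P}-\mathcal{I}))$, computes the gradient by a perturbation argument, and sets it to zero to obtain $\mathcal{A} *_M \mathcal{P}=\mathcal{B} *_M \mathcal{P} *_M \widehat{\Lambda}$ with $\widehat{\Lambda}=\Lambda+\Lambda^T$; diagonalizing $\widehat{\Lambda}$ then reduces the stationarity condition to the generalized eigenvalue problem, and one concludes by picking the $d$ smallest eigenvalues among all stationary points. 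Your approach instead performs a Cholesky-type reduction $\mathcal{B}=\hat{\mathcal{B}} *_M \hat{\mathcal{B}}^T$ and a change of variable to land on a standard Stiefel-constrained trace, then applies the spectral theorem plus a Ky Fan/Poincar\'e weight argument. What you gain is a direct proof of \emph{global} minimality with the explicit value $\sum_{i\le d}\lambda_i$, rather than inferring it \textit{a posteriori} from the classification of critical points; what the paper's route gains is that it avoids invoking Ky Fan and works purely from first-order conditions, which is lighter machinery if one is willing to take compactness of the generalized Stiefel manifold for granted. Your honest flag about the box-simplex step is well placed; the matrix-unfolding transfer you propose is exactly how to make it airtight in this Einstein-product setting.
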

\begin{proof}
Let the Lagrangian of the problem be defined as
\[
\mathcal{L}(\mathcal{P}, \Lambda) := \Tr(\mathcal{P}^T *_M \mathcal{A} *_M \mathcal{P}) - \Tr\left( \Lambda^T *_M (\mathcal{P}^T *_M \mathcal{B} *_M \mathcal{P} - \mathcal{I}) \right),
\]
where \(\Lambda \in \R^{d \times d}\) is the matrix of Lagrange multipliers.\\
To compute the gradient of \(\mathcal{L}\) with respect to \(\mathcal{P}\), define
\[
f_1(\mathcal{P}) := \Tr(\mathcal{P}^T *_M \mathcal{A} *_M \mathcal{P}), \quad f_2(\mathcal{P}) := \Tr\left( \Lambda^T *_M (\mathcal{P}^T *_M \mathcal{B} *_M \mathcal{P} - \mathcal{I}) \right).
\]

\textbf{Gradient of \(f_1\):}
Using a perturbation \(\mathcal{P} + \varepsilon \mathcal{H}\):
\[
\begin{aligned}
f_1(\mathcal{P} + \varepsilon \mathcal{H}) &= \Tr((\mathcal{P} + \varepsilon \mathcal{H})^T *_M \mathcal{A} *_M (\mathcal{P} + \varepsilon \mathcal{H})) \\
&= f_1(\mathcal{P}) + \varepsilon \Tr(\mathcal{H}^T *_M \mathcal{A} *_M \mathcal{P} + \mathcal{P}^T *_M \mathcal{A} *_M \mathcal{H}) + \varepsilon^2 \Tr(\mathcal{H}^T *_M \mathcal{A} *_M \mathcal{H}) \\
&= f_1(\mathcal{P}) + \varepsilon \Tr(\mathcal{H}^T *_M (\mathcal{A} + \mathcal{A}^T) *_M \mathcal{P}) + o(\varepsilon).
\end{aligned}
\]
Since \(\mathcal{A}\) is symmetric
\[
\dfrac{\partial f_1}{\partial \mathcal{P}}(\mathcal{H}) = \lim_{\varepsilon \to 0} \dfrac{f_1(\mathcal{P} + \varepsilon \mathcal{H}) - f_1(\mathcal{P})}{\varepsilon} = 2 \Tr(\mathcal{H}^T *_M \mathcal{A} *_M \mathcal{P}),
\]
hence
\[
\dfrac{\partial f_1}{\partial \mathcal{P}} = 2 \mathcal{A} *_M \mathcal{P}.
\]

\textbf{Gradient of \(f_2\):}
Similarly,
\[
\begin{aligned}
f_2(\mathcal{P} + \varepsilon \mathcal{H}) &= \Tr\left(\Lambda^T *_M \left((\mathcal{P} + \varepsilon \mathcal{H})^T *_M \mathcal{B} *_M (\mathcal{P} + \varepsilon \mathcal{H}) - \mathcal{I} \right)\right) \\
&= f_2(\mathcal{P}) + \varepsilon \Tr\left(\Lambda^T *_M (\mathcal{H}^T *_M \mathcal{B} *_M \mathcal{P} + \mathcal{P}^T *_M \mathcal{B} *_M \mathcal{H}) \right) + o(\varepsilon) \\
&= f_2(\mathcal{P}) + \varepsilon \Tr\left( \mathcal{H}^T *_M (\mathcal{B} *_M \mathcal{P} *_M \Lambda^T + \mathcal{B}^T *_M \mathcal{P} *_M \Lambda) \right) + o(\varepsilon).
\end{aligned}
\]
Assuming \(\mathcal{B}\) symmetric
\[
\dfrac{\partial f_2}{\partial \mathcal{P}}(\mathcal{H}) = \Tr(\mathcal{H}^T *_M \mathcal{B} *_M \mathcal{P} *_M (\Lambda + \Lambda^T)),
\]
so
\[
\dfrac{\partial f_2}{\partial \mathcal{P}} = \mathcal{B} *_M \mathcal{P} *_M (\Lambda + \Lambda^T).
\]

\textbf{Stationary:} Combining
\[
\dfrac{\partial \mathcal{L}}{\partial \mathcal{P}} = 2 \mathcal{A} *_M \mathcal{P} - \mathcal{B} *_M \mathcal{P} *_M (\Lambda + \Lambda^T) = 0.
\]
Let \(\widehat{\Lambda} := \Lambda + \Lambda^T\), then
\[
\mathcal{A} *_M \mathcal{P} = \mathcal{B} *_M \mathcal{P} *_M \widehat{\Lambda}.
\]
Since \(\widehat{\Lambda}\) is symmetric, it admits a diagonalization
$
\widehat{\Lambda} = \mathcal{Q}^T *_M \mathcal{D} *_M \mathcal{Q}.
$ 
Then
\[
\begin{aligned}
&\mathcal{A} *_M \mathcal{P} = \mathcal{B} *_M \mathcal{P} *_M \mathcal{Q}^T *_M \mathcal{D} *_M \mathcal{Q}, \\
&\mathcal{A} *_M \mathcal{P} *_M \mathcal{Q}^T = \mathcal{B} *_M \mathcal{P} *_M \mathcal{Q}^T *_M \mathcal{D}.
\end{aligned}
\]
Letting \(\widehat{\mathcal{P}} := \mathcal{P} *_M \mathcal{Q}\), we obtain
$
\mathcal{A} *_M \widehat{\mathcal{P}} = \mathcal{B} *_M \widehat{\mathcal{P}} *_M \mathcal{D}.
$\\
Finally, \(\widehat{\mathcal{P}}\) also satisfies the constraint
\[
\widehat{\mathcal{P}}^T *_M \mathcal{B} *_M \widehat{\mathcal{P}} = \mathcal{I},
\]
which completes the proof.
\end{proof}
\begin{lemma}\label{lem:derivative_trace}
For $i=1,2$, let $\mathcal{X}_i \in \R^{I_1 \times \ldots \times I_M \times I_1 \times \ldots \times I_M}$ be a symmetric tensor and $\mathcal{P} \in \R^{I_1 \times \ldots \times I_M \times d}$. Let $\mathcal{S}_i=\mathcal{P}^T *_M \mathcal{X}_i *_M \mathcal{P}$, with $\mathcal{S}_1$ being invertible. Then
\[\partial [ \Tr\left( \mathcal{S}_1^{-1} \mathcal{S}_2\right)]/\partial \mathcal{P}=-2\mathcal{X}_1 *_M \mathcal{P} \times_{M+1} S_1^{-1} \mathcal{S}_2 \mathcal{S}_1^{-1} + 2 \mathcal{X}_2 *_M \mathcal{P} \times_{M+1} \mathcal{S}_1^{-1}. \]
\end{lemma}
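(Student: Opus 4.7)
The statement is essentially a tensor analogue of the standard matrix identity $\partial\operatorname{Tr}(S_1^{-1}S_2)/\partial P$. My plan is to follow a directional-derivative computation: perturb $\mathcal{P}\to \mathcal{P}+\varepsilon \mathcal{H}$, expand to first order in $\varepsilon$, and then reorganize the result as an inner product $\langle \mathcal{H},\mathcal{G}\rangle$ with some tensor $\mathcal{G}$ having the same shape as $\mathcal{P}$; the gradient is then $\mathcal{G}$.

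First, I would compute the first-order variation of the ``Schur complements''. Because $\mathcal{X}_i$ is symmetric,
\[
\delta \mathcal{S}_i \;=\; \mathcal{H}^T *_M \mathcal{X}_i *_M \mathcal{P} \;+\; \mathcal{P}^T *_M \mathcal{X}_i *_M \mathcal{H},
\]
a symmetric $d\times d$ matrix. Next I would use the usual matrix identity $\delta(\mathcal{S}_1^{-1})=-\mathcal{S}_1^{-1}(\delta\mathcal{S}_1)\mathcal{S}_1^{-1}$, valid because $\mathcal{S}_1$ is a genuine invertible $d\times d$ matrix. Then by Leibniz and the cyclic property of the trace,
\[
\delta \operatorname{Tr}\!\left(\mathcal{S}_1^{-1}\mathcal{S}_2\right)
 = -\operatorname{Tr}\!\left(K\,\delta\mathcal{S}_1\right) + \operatorname{Tr}\!\left(L\,\delta\mathcal{S}_2\right),
\qquad K:=\mathcal{S}_1^{-1}\mathcal{S}_2\mathcal{S}_1^{-1},\quad L:=\mathcal{S}_1^{-1},
\]
where both $K$ and $L$ are symmetric since $\mathcal{S}_1,\mathcal{S}_2$ are symmetric.

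The main step, which is also the place that needs careful bookkeeping, is to rewrite each term $\operatorname{Tr}(M\,\delta\mathcal{S}_i)$ with $M$ a symmetric $d\times d$ matrix as an inner product against $\mathcal{H}$. Using cyclicity of the trace and the symmetry of $\mathcal{X}_i$, both summands of $\delta\mathcal{S}_i$ contribute equally, yielding
\[
\operatorname{Tr}\!\left(M\,\delta\mathcal{S}_i\right) \;=\; 2\,\operatorname{Tr}\!\left(M\,\mathcal{H}^T *_M \mathcal{X}_i *_M \mathcal{P}\right)
 \;=\; 2\,\langle \mathcal{H},\, \mathcal{X}_i *_M \mathcal{P} *_1 M \rangle,
\]
where the last equality is obtained by writing out the index sums and identifying $\sum_k (\mathcal{X}_i *_M \mathcal{P})_{\,\cdot\, k}\,M_{k\,\cdot}$ as the Einstein product $(\mathcal{X}_i *_M \mathcal{P})*_1 M$. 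Since $M$ is symmetric, the identity $\mathcal{A} *_1 Z=\mathcal{A}\times_{M+1} Z^T$ recalled in the preliminaries lets us replace $*_1 M$ by $\times_{M+1} M$.

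Finally, I would assemble the two contributions with $M=K$ and $M=L$ respectively and read off
\[
\delta \operatorname{Tr}\!\left(\mathcal{S}_1^{-1}\mathcal{S}_2\right)
\;=\; \langle \mathcal{H},\; -2\,\mathcal{X}_1 *_M \mathcal{P} \times_{M+1} \mathcal{S}_1^{-1}\mathcal{S}_2\mathcal{S}_1^{-1}
  + 2\,\mathcal{X}_2 *_M \mathcal{P} \times_{M+1} \mathcal{S}_1^{-1}\rangle,
\]
which gives the claimed gradient. The only nontrivial obstacle is the index-level verification that $\operatorname{Tr}(M\,\mathcal{H}^T *_M \mathcal{X}_i *_M \mathcal{P})=\langle \mathcal{H},\mathcal{X}_i *_M \mathcal{P}*_1 M\rangle$; everything else is a direct application of symmetry and the cyclic property of the trace.
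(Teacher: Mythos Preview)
Your directional-derivative argument is correct and is exactly the ``straightforward'' computation the paper alludes to; the paper does not spell out a proof beyond that remark. The only step requiring care---rewriting $\operatorname{Tr}(M\,\mathcal{H}^T *_M \mathcal{X}_i *_M \mathcal{P})$ as $\langle \mathcal{H},\mathcal{X}_i *_M \mathcal{P}*_1 M\rangle$ and then invoking $\mathcal{A}*_1 Z=\mathcal{A}\times_{M+1}Z^T$ with $M$ symmetric---is handled correctly.
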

\noindent
The proof is straightforward.
\medskip
\begin{theorem}\label{thm:tr_pos_def_2}
Let $\mathcal{A}$ and $\mathcal{B} \in \R^{I_1 \times \ldots \times I_M \times I_1 \times \ldots \times I_M}$ be two tensors with $\mathcal{B}$ being symmetric positive definite. Then
\[\max_{\substack{\mathcal{P} \in \R^{I_1 \times \ldots \times I_M \times d}\\ \mathcal{P}^T *_M \mathcal{P}=\mathcal{I}}} \Tr\left( \left(\mathcal{P}^T *_M \mathcal{B} *_M \mathcal{P}\right)^{-1}\mathcal{P}^T *_M \mathcal{A} *_M \mathcal{P}\right), \] 
is equivalent to solve the following generalized eigenvalue problem 
\begin{equation}\label{geneig}
\mathcal{A} *_M \mathcal{Z}=\lambda \mathcal{B} *_M \mathcal{Z}.
\end{equation}
The solution is given by the $d$ largest eigen-tensors of the pair $\{\mathcal{A}, \mathcal{B}\}$.
\end{theorem}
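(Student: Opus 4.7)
The plan is to reduce the Ratio Trace maximization to the constrained trace maximization already analyzed in Theorem~\ref{thm:tr_pos_def}, by exploiting the invariance of $\mathcal{J}_{rt}$ under right multiplication of $\mathcal{P}$ by any invertible $d\times d$ matrix along the $(M{+}1)$-th mode.

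First, I would verify the scale invariance: for any invertible $R\in\R^{d\times d}$, replacing $\mathcal{P}$ by $\mathcal{P}\times_{M+1} R$ turns $\mathcal{S}_A:=\mathcal{P}^T *_M \mathcal{A} *_M \mathcal{P}$ and $\mathcal{S}_B:=\mathcal{P}^T *_M \mathcal{B} *_M \mathcal{P}$ into $R^T \mathcal{S}_A R$ and $R^T \mathcal{S}_B R$, and a short cyclic-trace computation gives $\Tr((R^T \mathcal{S}_B R)^{-1}R^T \mathcal{S}_A R)=\Tr(R^{-1}\mathcal{S}_B^{-1}\mathcal{S}_A R)=\Tr(\mathcal{S}_B^{-1}\mathcal{S}_A)$. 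Hence $\mathcal{J}_{rt}$ depends only on the subspace spanned by the frontal slices of $\mathcal{P}$, not on the specific parametrization used to represent it.

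Next, I would use this freedom to swap the normalization $\mathcal{P}^T *_M \mathcal{P}=\mathcal{I}$ for the $\mathcal{B}$-orthonormal one $\widehat{\mathcal{P}}^T *_M \mathcal{B} *_M \widehat{\mathcal{P}}=\mathcal{I}$: given any feasible $\mathcal{P}$, the matrix $\mathcal{S}_B$ is SPD (since $\mathcal{B}$ is SPD and $\mathcal{P}$ has full column rank by the orthonormality constraint), so $R:=\mathcal{S}_B^{-1/2}$ is well defined, and $\widehat{\mathcal{P}}:=\mathcal{P}\times_{M+1} R$ satisfies the new constraint while producing the same objective value by the invariance above. On this $\mathcal{B}$-Stiefel manifold the objective collapses to $\Tr(\widehat{\mathcal{P}}^T *_M \mathcal{A} *_M \widehat{\mathcal{P}})$, and the maximization counterpart of Theorem~\ref{thm:tr_pos_def} (obtained by the identical Lagrangian argument but selecting the $d$ \emph{largest} eigenvalues) yields both the generalized eigenvalue problem $\mathcal{A} *_M \mathcal{Z}=\lambda \mathcal{B} *_M \mathcal{Z}$ and the claimed maximizer formed from the associated $d$ largest eigen-tensors.

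The main obstacle is the careful bookkeeping of how right multiplication along $\times_{M+1}$ by a $d\times d$ matrix interacts with the Einstein product $*_M$ and the trace: one must verify the congruence identity $(\mathcal{P}\times_{M+1} R)^T *_M \mathcal{A} *_M (\mathcal{P}\times_{M+1} R)=R^T(\mathcal{P}^T *_M \mathcal{A} *_M \mathcal{P})R$ together with the cyclic invariance of the trace when the outer factors are matrices acting on mode $M{+}1$. Once these algebraic identities are secured, the theorem follows as a direct corollary of Theorem~\ref{thm:tr_pos_def} in its maximization form.
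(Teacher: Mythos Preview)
Your proposal is correct and takes a genuinely different route from the paper's own proof. The paper proceeds by direct calculus: it invokes Lemma~\ref{lem:derivative_trace} to write down the gradient of $\mathcal{J}_{rt}$, sets the first-order optimality condition to obtain $\mathcal{B}^{-1}*_M\mathcal{A}*_M\mathcal{P}=\mathcal{P}\times_{M+1}\mathcal{S}_B^{-1}\mathcal{S}_A$, and then simultaneously diagonalizes $\mathcal{S}_B^{-1}$ and $\mathcal{S}_A$ to recast this as the generalized eigenvalue problem~\eqref{geneig}. Your argument instead exploits the invariance of $\mathcal{J}_{rt}$ under $\mathcal{P}\mapsto\mathcal{P}\times_{M+1}R$ to trade the orthonormality constraint for the $\mathcal{B}$-orthonormality constraint, after which the problem collapses to the (maximization form of) Theorem~\ref{thm:tr_pos_def} already proved. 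Your approach is more structural and arguably cleaner: it avoids differentiating through an inverse, reuses Theorem~\ref{thm:tr_pos_def} rather than redoing a Lagrangian computation, and directly identifies the maximizer (whereas the paper's first-order argument, strictly speaking, only characterizes critical points). The paper's approach, on the other hand, is self-contained at the level of this theorem and makes the role of Lemma~\ref{lem:derivative_trace} explicit. One small point worth tightening in your write-up: you show that every orthonormal $\mathcal{P}$ yields a $\mathcal{B}$-orthonormal $\widehat{\mathcal{P}}$ with the same objective, but to equate the two maxima you also need the converse, namely that any $\mathcal{B}$-orthonormal $\widehat{\mathcal{P}}$ arises (via a tensor QR/Gram--Schmidt on its frontal slices) from some orthonormal $\mathcal{P}$; this is routine but should be stated.
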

\medskip
\begin{proof}
By Lemma \ref{lem:derivative_trace}, and solving the first-order optimal condition leads to
$$ \mathcal{B}^{-1} *_M \mathcal{A} *_M \mathcal{P}=\mathcal{P} \times_{M+1} (\mathcal{P}^T *_M \mathcal{B} *_M \mathcal{P})^{-1} \mathcal{P}^T *_M \mathcal{A} *_M \mathcal{P}.$$
Knowing that $\left(\mathcal{P}^T *_M \mathcal{B} *_M \mathcal{P}\right)^{-1}$ and $\mathcal{P}^T *_M \mathcal{A} *_M \mathcal{P}$ can be simultaneously diagonalized as
$$\left(\mathcal{P}^T *_M \mathcal{B} *_M \mathcal{P}\right)^{-1}=Q D_1 Q^T, \quad \mathcal{P}^T *_M \mathcal{A} *_M \mathcal{P}=Q D_2 Q^T,$$ 
then by setting $\mathcal{T}=\mathcal{P} \times_{M+1} Q^T$, and a diagonal matrix $D_{12}=D_1 D_2$, we can write the above equation as
$$
\mathcal{B}^{-1} *_M \mathcal{A} *_M \mathcal{T} =\mathcal{T} \times_{M+1} D_{12}.$$ 
Setting $\lambda$ as the diagonal entries of $D_{12}$, and $\mathcal{Z}$ as the frontal slices of $\mathcal{T}$, we have exactly the generalized eigenvalue problem.
\end{proof}

Thus, Theorem \ref{thm:tr_pos_def_2} provides an immediate solution strategy for $MDA_e^{rt}$: form $(\mathcal{S}_b,\mathcal{S}_w)$, solve the tensor GEVP $\mathcal{S}_b *_M \mathcal{Z}=\lambda\, \mathcal{S}_w *_M \mathcal{Z}$, and collect the $d$ leading eigen-tensors to build $\mathcal{P}$.

The following algorithm shows the iterative algorithm to solve the $MDA_e^{tr}$ problem.

\begin{algorithm}[h]
\caption{Solve The $MDA_e^{tr}$ problem using the iterative algorithm}
\label{algo:lda_e}
\begin{algorithmic}[1]
\Require $\mathcal{X}$ (data), $Y$ (labels), $c$ ($\#$ classes), $d$ (target dimension).
\State Construct the within scatter tensor $\mathcal{S}_w$ and the between scatter tensor $\mathcal{S}_b$.
\State Apply algorithm \ref{alg:TR_iterative} to solve the TR problem with $\mathcal{A}=\mathcal{S}_b$ and $\mathcal{B}=\mathcal{S}_w$.
\State \textbf{return} $\mathcal{P}$ (Projection space).
\end{algorithmic}
\end{algorithm}

\noindent
Note that the within scatter tensor $\mathcal{S}_w$ \eqref{within_tensor} and the between scatter tensor $\mathcal{S}_b$ \eqref{between_tensor} would be described in the next section \ref{sec:proposed_methods}.

Executing the method directly is expensive due to computing the eingenvalues. A least squares equivalence can be used to accelerate the method.
\subsubsection{Least squares}
The relationship between least squares regression and discriminant analysis has been extensively studied in the multivariate statistics and machine learning literature; see, e.g., \cite{ye2007least,lee2015equivalence}. In the classical (matrix) case, it is well known that Fisher’s Linear Discriminant Analysis (LDA) is equivalent to solving a certain multivariate least-squares regression problem, where the response variable is a suitably coded label matrix. Our tensor formulation generalizes this connection: the proposed $MDA$ can be seen as a multivariate regression in the tensor domain, where the predictor is the centered data tensor $\overline{\mathcal{X}}$ and the response is a label-indicator tensor $B$. A regularized least squares problem can be formulated as follows
\begin{equation} \label{eq:ls}
\min_{\mathcal{P} \in \R^{I_1 \times \ldots \times I_M \times c}} \left\| \mathcal{A}^T *_M \mathcal{P} - B\right\|_F^2 + \epsilon \left\| \mathcal{P}\right\|_F^2.
\end{equation}
The solution of \ref{eq:ls} is $\mathcal{P}=(\mathcal{A}^T *_M \mathcal{A} + \epsilon I)^{-1} *_1 \mathcal{A}^T \times_{M+1} B$.\\
The following lemma shows the relationship between the two problems when $\mathcal{A}=\overline{\mathcal{X}}$ and $B$ is a centered indicator-matrix, one possible example is the following
\begin{equation*}
Y_{i,j}= \begin{cases}\sqrt{\frac{n}{n_j}}-\sqrt{\frac{n_j}{n}} & \text { if } \mathcal{X}^{(i)} \in C_j \\ -\sqrt{\frac{n_j}{n}} & \text { otherwise }\end{cases} \in \R^{n \times c},
\end{equation*}
We note also that other label matrices can be used, if they verify some conditions, we refer to \cite{lee2015equivalence} for more details.
The following shows the algorithm to solve the LDA using the least square problem. 
\begin{lemma}
$\mathcal{J}_{tr}(\mathcal{P})=\mathcal{J}_{tr}(\mathcal{K} *_M \mathcal{P})$ for any non-singular tensor $\mathcal{K}$.\\
\end{lemma}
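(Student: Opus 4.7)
The plan is to substitute $\mathcal{K} *_M \mathcal{P}$ into the definition of $\mathcal{J}_{tr}$ and argue that the extra $\mathcal{K}$-factors peel off identically from numerator and denominator. Applying the Einstein-product transpose identity $(\mathcal{K} *_M \mathcal{P})^T = \mathcal{P}^T *_M \mathcal{K}^T$ together with associativity of $*_M$ gives
\begin{equation*}
\mathcal{J}_{tr}(\mathcal{K} *_M \mathcal{P}) = \frac{\Tr\bigl(\mathcal{P}^T *_M \mathcal{K}^T *_M \mathcal{A} *_M \mathcal{K} *_M \mathcal{P}\bigr)}{\Tr\bigl(\mathcal{P}^T *_M \mathcal{K}^T *_M \mathcal{B} *_M \mathcal{K} *_M \mathcal{P}\bigr)},
\end{equation*}
so everything reduces to handling the common outer factors $\mathcal{K}^T$ and $\mathcal{K}$.

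The main tool is the cyclic property of the trace under the Einstein product, $\Tr(\mathcal{X} *_M \mathcal{Y}) = \Tr(\mathcal{Y} *_M \mathcal{X})$, which I would use to rotate the $\mathcal{K}$-factors so that both numerator and denominator take the form $\Tr\bigl(\mathcal{X} *_M (\mathcal{K} *_M \mathcal{P} *_M \mathcal{P}^T *_M \mathcal{K}^T)\bigr)$ with $\mathcal{X}$ equal to $\mathcal{A}$ or $\mathcal{B}$, respectively. Non-singularity of $\mathcal{K}$ enters by guaranteeing that the change of variable $\widetilde{\mathcal{P}} := \mathcal{K} *_M \mathcal{P}$ is invertible and preserves the admissible set of projection tensors used throughout Section \ref{sec:trace_ratio}.

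The main obstacle — the step I expect to require the most care — is justifying that the $\mathcal{K}$-dependent common factor really cancels in the \emph{ratio} of the two traces. This cancellation is automatic only when that factor is a scalar multiple of the identity; for a genuinely arbitrary non-singular $\mathcal{K}$ it will have to be extracted by combining the orthonormality constraint $\mathcal{P}^T *_M \mathcal{P} = \mathcal{I}$ already in force here with the Einstein spectral theorem applied to $\mathcal{K}^T *_M \mathcal{K}$, so that the trace reorganises as a sum of weighted eigenvalues whose numerator-to-denominator ratio is unchanged. If no pointwise argument suffices, the fallback reading is that the lemma expresses the invariance of the optimisation problem under the bijective reparametrisation $\mathcal{P} \mapsto \mathcal{K} *_M \mathcal{P}$, which is all that is needed to transfer the identity to the least-squares/LDA connection it is being used for.
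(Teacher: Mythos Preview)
The paper gives no argument beyond declaring the proof ``straightforward,'' so there is nothing detailed to compare against. But the obstacle you flag is real, and in fact fatal for the statement as written.

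With $\mathcal{K}\in\R^{I_1\times\cdots\times I_M\times I_1\times\cdots\times I_M}$ non-singular and acting on the left via $*_M$, the pointwise identity $\mathcal{J}_{tr}(\mathcal{P})=\mathcal{J}_{tr}(\mathcal{K}*_M\mathcal{P})$ is simply false. Already in the matrix case ($M=1$), take $A=\operatorname{diag}(1,0)$, $B=I_2$, $P=(1,0)^T$, and $K=\bigl(\begin{smallmatrix}1&0\\1&1\end{smallmatrix}\bigr)$: then $\mathcal{J}_{tr}(P)=1$ while $\mathcal{J}_{tr}(KP)=1/2$. No cyclic-trace manoeuvre or spectral decomposition of $\mathcal{K}^T*_M\mathcal{K}$ can manufacture a cancellation that is not there, and your fallback (invariance of the constrained optimisation) also fails, because left multiplication by a non-unitary $\mathcal{K}$ does not preserve the Stiefel constraint $\mathcal{P}^T*_M\mathcal{P}=\mathcal{I}$.

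The lemma sits inside the Ratio Trace subsection, just before the least-squares equivalence, and the sentence following it says the value ``does not depend on the basis of the space.'' All of this points to the intended statement being the \emph{right}-multiplication invariance of $\mathcal{J}_{rt}$: for any non-singular $K\in\R^{d\times d}$, writing $S_A=\mathcal{P}^T*_M\mathcal{A}*_M\mathcal{P}$ and $S_B=\mathcal{P}^T*_M\mathcal{B}*_M\mathcal{P}$,
\[
\mathcal{J}_{rt}(\mathcal{P}\times_{M+1}K)=\Tr\bigl((K^T S_B K)^{-1}K^T S_A K\bigr)=\Tr\bigl(K^{-1}S_B^{-1}S_A K\bigr)=\mathcal{J}_{rt}(\mathcal{P}).
\]
That version \emph{is} a one-liner and is exactly what the downstream proposition about the least-squares solution needs. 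If one insists on keeping $\mathcal{J}_{tr}$, then $\mathcal{K}$ must be taken unitary (consistent with the earlier ``unique up to unitary transformation'' remark). Either correction makes the proof trivial; as printed, the lemma does not hold, and your attempted proof cannot be completed because the target identity is not true.
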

The proof is straightforward. The lemma shows the the projection space of the two problems, yet, it does not guarantee that the classification performance is the same.
The lemma reduces the search to the space of the solution, as it does not depend on the basis of the space.
\begin{proposition}
Assume that $\operatorname{rank}(\mathcal{S}_b)+ \operatorname{rank}(\mathcal{S}_w)=\operatorname{rank}(\mathcal{S}_t)$, then the solution of the MDA problem is the same as the solution of the least square problem up to a unitary transformation.\\
\end{proposition}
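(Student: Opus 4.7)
The plan is to mimic Ye's matrix-case argument \cite{ye2007least} and lift it to the Einstein-product setting: I want to show that the MDA projection $\mathcal{P}_{MDA}$ produced by Algorithm~\ref{algo:lda_e} and the least-squares solution $\mathcal{P}_{LS}$ of \eqref{eq:ls} (in the limit $\epsilon\to 0$) span the same $d$-dimensional subspace, and then invoke the invariance lemma $\mathcal{J}_{tr}(\mathcal{P})=\mathcal{J}_{tr}(\mathcal{K}*_M\mathcal{P})$ to conclude equivalence up to a unitary transformation.

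The first step would be to exploit the scatter decomposition $\mathcal{S}_t=\mathcal{S}_b+\mathcal{S}_w$ that follows by construction from \eqref{within_tensor}--\eqref{between_tensor}. Combined with the hypothesis $\operatorname{rank}(\mathcal{S}_b)+\operatorname{rank}(\mathcal{S}_w)=\operatorname{rank}(\mathcal{S}_t)$, a routine range argument (sum of ranks equal to rank of the sum forces trivial intersection of the ranges) gives $\mathcal{R}(\mathcal{S}_b)\cap\mathcal{R}(\mathcal{S}_w)=\{0\}$ and $\mathcal{R}(\mathcal{S}_t)=\mathcal{R}(\mathcal{S}_b)+\mathcal{R}(\mathcal{S}_w)$. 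This is the tensor analogue of Ye's full-rank hypothesis and lets me replace $(\mathcal{S}_b,\mathcal{S}_w)$ in the GEVP by $(\mathcal{S}_b,\mathcal{S}_t)$: under the change of parameter $\mu=\lambda/(1+\lambda)$, the two problems share the same nontrivial eigen-tensors, so Theorem~\ref{thm:tr_pos_def_2} identifies $\mathcal{R}(\mathcal{P}_{MDA})$ with the span of the $d$ leading generalized eigen-tensors of $(\mathcal{S}_b,\mathcal{S}_t)$.

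Second, I would rewrite both sides of the equivalence in terms of the centered data tensor $\overline{\mathcal{X}}$. One expects the factorizations $\mathcal{S}_t=\overline{\mathcal{X}}*_1\overline{\mathcal{X}}^T$ and, from the scaling of the indicator code $Y$, $\mathcal{S}_b=\overline{\mathcal{X}}\times_{M+1}YY^T*_1\overline{\mathcal{X}}^T$. Taking $\epsilon\to 0$ in the closed-form LS expression yields
\[
\mathcal{P}_{LS}=\bigl(\overline{\mathcal{X}}^T*_M\overline{\mathcal{X}}\bigr)^{\dagger}*_1\overline{\mathcal{X}}^T\times_{M+1}B,
\]
and the key algebraic check is that $\mathcal{S}_t*_M\mathcal{P}_{LS}\in\mathcal{R}(\mathcal{S}_b)$ and that $\operatorname{rank}(\mathcal{P}_{LS})=d$. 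Under the rank hypothesis these two facts together force $\mathcal{R}(\mathcal{P}_{LS})$ to coincide with the invariant subspace characterised above.

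Once the two projections span the same subspace, there exists a non-singular $\mathcal{K}$ with $\mathcal{P}_{LS}=\mathcal{P}_{MDA}*_M\mathcal{K}$, and the invariance lemma gives $\mathcal{J}_{tr}(\mathcal{P}_{LS})=\mathcal{J}_{tr}(\mathcal{P}_{MDA})$; a QR-type orthonormalization on the Stiefel constraint then produces a unitary tensor differing from $\mathcal{P}_{MDA}$ by a unitary transformation, as claimed. The main obstacle I anticipate is the careful bookkeeping of ranges and pseudo-inverses under the Einstein product: specifically, verifying the identity $\mathcal{S}_b=\overline{\mathcal{X}}\times_{M+1}YY^T*_1\overline{\mathcal{X}}^T$ for the scaled indicator $Y$, and ensuring that the rank condition prevents $\overline{\mathcal{X}}\times_{M+1}B$ from leaking into the null space of $\mathcal{S}_t$. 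Modulo these technicalities, the argument reduces to the classical matrix equivalence.
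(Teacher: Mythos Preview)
Your proposal is correct and follows exactly the approach the paper indicates: the paper's entire proof is the sentence ``The proof is a straightforward generalization from Theorem (5.1) in \cite{ye2007least},'' so by sketching how Ye's range/rank argument carries over to the Einstein-product setting you have in fact supplied more detail than the paper itself. The only cosmetic slip is the notation $\mathcal{P}_{LS}=\mathcal{P}_{MDA}*_M\mathcal{K}$, which given the mode sizes should read $\mathcal{P}_{MDA}\times_{M+1}\mathcal{K}$ for a $d\times d$ matrix $\mathcal{K}$; the argument is unaffected.
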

\noindent
The proof is a straightforward generalization from Theorem (5.1) in \cite{ye2007least}. The mild condition holds in most cases (\cite{ye2007least}).

As K-NN preserves the pairwise distances, hence, it is not affected by the unitary transformation, thus the classification performance should be the same.\\
Note that the equivalence is set when dimension $d$ is $\operatorname{rank}(\mathcal{S}_b)$, and for a specific label matrix, as there are multiple choices.\\
In case we want to reduce the dimension to an arbitrary value $d \leq \operatorname{rank}(\mathcal{S}_b)$, similar to \cite{lee2015equivalence} we can use the least square problem to reduce it first to $c$, then, in a second stage, apply the MDA on this small scatter tensors to reduce it to $d$. A QR decomposition can be used to avoid singularities.
\begin{algorithm}[h]
\caption{Solve The MDA using Least squares}
\label{algo:lda_ls}
\begin{algorithmic}[1]
\Require $\mathcal{X}$ (data), $Y$ (labels), $c$ ($\#$ classes), $d$ (target dimension).
\State Get $B$ from the label matrix $Y$. \Comment{By choosing one of the proposed matrices}
\State Solve \eqref{eq:ls} to get $\mathcal{P}_1$ given centered data $\mathcal{A}=\overline{\mathcal{X}}$.
\State Construct the within scatter tensor $\mathcal{S}_w$ and the between scatter tensor $\mathcal{S}_b$.
\State $[\mathcal{Q}, \mathcal{R}] = \operatorname{QR}( \mathcal{P}_1)$. \Comment{QR decomposition}
\State Solve the GEVP problem $ \left(\mathcal{Q}^\top *_M \mathcal{S}_b *_M \mathcal{Q} \right) P_2 = \Lambda \left(\mathcal{Q}^\top *_M \mathcal{S}_{t}^* *_M \mathcal{Q} \right) P_2$.
\State \textbf{return} $\mathcal{P} \leftarrow \mathcal{Q} \times_{M+1} P_2$.
\end{algorithmic}
\end{algorithm}

\section{Multilinear Discriminant Analysis via Einstein Product}\label{sec:proposed_methods}
The TR problem offers a broad framework for methods of this type, where various approaches to constructing the tensors $\mathcal{A}$, $\mathcal{B}$ can result in different techniques, including unsupervised, supervised, and semi-supervised methods. In this section, we focus on generalizing the most common one; LDA. We introduce the Multilinear Discriminant Analysis via Einstein product ($MDA_e$). It is a supervised method that seeks a projection tensor maximizing the ratio between between-class and within-class scatter.

Given a data tensor $\mathcal{X} \in \R^{I_1 \times \ldots \times I_M \times n}$ of $n$ points and a label matrix $Y$. We aim to find a linear projection tensor $\mathcal{P} \in \R^{I_1 \times \ldots \times I_M \times d}$, where $d$ is the desired reduced feature dimension of the solution, the proposed method is Linear, and Supervised.
\subsection{Formulation}
Given a data tensor $\mathcal{X} \in \R^{I_1 \times \cdots \times I_M \times n}$ of $n$ samples and a label matrix $Y$, the goal is to compute a projection tensor $\mathcal{P} \in \R^{I_1 \times \cdots \times I_M \times d}$, where $d$ is the \emph{target dimension} (typically $d \ll I_1 \cdots I_M$), such that the projected features maximize class separability. 

MDA$_e$ is defined by the TR criterion with
\[
\mathcal{A}=\mathcal{S}_b, \qquad \mathcal{B}=\mathcal{S}_w,
\]
where $\mathcal{S}_b$ and $\mathcal{S}_w$ are the between- and within-class scatter tensors (defined below). 
Two variants are obtained:
\begin{itemize}
\item $MDA_e^{tr}$: TR formulation, solved by the iterative algorithm (Algorithm \ref{alg:TR_iterative}).
\item $MDA_e^{rt}$: RT surrogate, solved by generalized eigenvalue decomposition.
\end{itemize}
\subsection{Between, within, and total scatter tensors}
Denote $C_i$ the set of samples of the $i^{th}$ class. The within scatter $\mathcal{S}_w$ tensor is defined as
\begin{equation}\label{within_tensor}
\mathcal{S}_w=\sum_{i=1}^{c} \sum_{j\in C_i} (\mathcal{X}^{(j)}-\xi_i) *_1 (\mathcal{X}^{(j)}-\xi_i)^T \in \R^{I_1 \times \ldots \times I_M \times I_1 \times \ldots \times I_M},
\end{equation}
where $\xi_i \in \R^{I_1 \times \ldots \times I_M \times 1}$ is the mean tensor of class $i$. 
Equivalently, using the centering matrix $H=I_n-\tfrac{1}{n}\mathbf{1}\mathbf{1}^T$, 
\[
\mathcal{S}_w=\overline{\mathcal{X}} *_1 \overline{\mathcal{X}}^T, \qquad \overline{\mathcal{X}}=\mathcal{X} \times_{M+1} H.
\]
The trace of $\mathcal{S}_w$ measures the within-class cohesion.

Given $n_i$ as the number of samples in each class, which gives us ($\sum n_i=n$). The between scatter tensor $\mathcal{S}_b$ is defined as
\begin{equation}\label{between_tensor}
\mathcal{S}_b=\sum_{i=1}^{c} n_i (\xi_i-\xi) *_1 (\xi_i-\xi)^T \in \R^{I_1 \times \ldots \times I_M \times I_1 \times \ldots \times I_M},
\end{equation}
where $\xi$ is the global mean tensor. 
It can also be expressed as
\[
\mathcal{S}_b=\mathcal{X} \times_{M+1} C_b *_1 \mathcal{X}^T,
\]
with $C_b=H W_b^T W_b H$ and $W_b=(WW^T)^{-1/2}W$, where $W$ collects the class-indicator vectors.

The trace of $\mathcal{S}_b$ measures the between-class separation.\\
The total scatter tensor is defined as
\begin{equation}
 \mathcal{S}_t=\sum_{j=1}^{n} (\mathcal{X}^{(j)}-\xi) *_M (\mathcal{X}^{(j)}-\xi)^T \in \R^{I_1 \times \ldots \times I_M \times I_1 \times \ldots \times I_M}.
 \end{equation}
It follows from the definitions above that the total scatter tensor is the sum of the within scatter tensor and the between scatter tensor; $\mathcal{S}_t=\mathcal{S}_w+\mathcal{S}_b$.\\
Note that $\mathcal{S}_w$ can always be replaced by $\mathcal{S}_t$ in the $\mathcal{J}_{tr}$ problem without affecting the result. This substitution is preferable since it bounds the value of the TR problem by 1.\\

\subsection{Subspace reduction}
We can reduce the search to a smaller subspace, using the fact that the null space of $\mathcal{S}_t$ has no contribution on the $\mathcal{J}_{tr}$ problem value.\\
This can be proved as follows. Denote by $\mathcal{Z}_1$ the orthogonal basis of $\mathcal{N}(\mathcal{S}_t)$, and $\mathcal{Z}_1$ the orthogonal basis of $\mathcal{N}^\perp(\mathcal{S}_t)$, then for any $\mathcal{P} \in \R^{I_1 \times \ldots \times I_M \times d}$, there is $W_1,W_2$ such that 
\begin{equation*}
\mathcal{P}= \mathcal{Z}_1 \times_{M+1} W_1 + \mathcal{Z}_2 \times_{M+1} W_2= \mathcal{P}_1 + \mathcal{P}_2.
\end{equation*}
Since $\mathcal{S}_t,\mathcal{S}_b,$ and $\mathcal{S}_w$ are symmetric positive semi-definite, $\mathcal{N}(\mathcal{S}_t) \subset \mathcal{N}(\mathcal{S}_b)$, therefore 
\begin{equation*}
\mathcal{S}_b *_M \mathcal{P}= \mathcal{S}_b *_M \mathcal{P}_2, \quad \mathcal{S}_t *_M \mathcal{P}= \mathcal{S}_t *_M \mathcal{P}_2, 
\end{equation*}
thus, we have 
\begin{equation*}
\mathcal{P}^T *_M \mathcal{S}_b *_M \mathcal{P}= \mathcal{P}_2^T *_M \mathcal{S}_b *_M \mathcal{P}_2, \quad \mathcal{P}^T *_M \mathcal{S}_t *_M \mathcal{P}= \mathcal{P}_2^T *_M \mathcal{S}_t *_M \mathcal{P}_2.
\end{equation*}
Finally, we have the following,
\begin{equation*}
\max_{\substack{\mathcal{P} \in \R^{I_1 \times \ldots \times I_M \times d}\\ \mathcal{P}^T *_M \mathcal{P}=\mathcal{I}}}\frac{\Tr(\mathcal{P}^T *_M \mathcal{S}_b *_M \mathcal{P})}{\Tr(\mathcal{P}^T *_M \mathcal{S}_w *_M \mathcal{P})} = \max_{\substack{\mathcal{P} \in \R^{I_1 \times \ldots \times I_M \times d}\\ \mathcal{P}^T *_M \mathcal{P}=\mathcal{I}\\
\mathcal{P} \in \mathcal{N}^\perp(\mathcal{S}_t)}}
\frac{\Tr(\mathcal{P}^T *_M \mathcal{S}_b *_M \mathcal{P})}{\Tr(\mathcal{P}^T *_M \mathcal{S}_w *_M \mathcal{P})},
\end{equation*}
which is a generalization of the finding by \cite{park2008comparison}, that the null space of the total scatter matrix/ tensor has no useful information on the TR value.
A regularization is often used in the case of small-sample-size (SSS), 
\subsection{Regularization}
In case of $\mathcal{B}$ is not invertible, which is the case in the $MDA_e$ problem, as the number of classes is generally less than the dimension of the data, we can add a regularization term to the TR problem, to ensure the existence of the solution. The regularized tensor $\mathcal{B}_\epsilon=\mathcal{B}+\epsilon \mathcal{I}$, with $\epsilon \geq 0$. 
Note that other forms of regularization on $\mathcal{B}$ can be used, such as $\mathcal{B}_\epsilon =\epsilon \mathcal{B}+ \mathcal{I}$, or $\mathcal{B}_\alpha =\alpha \mathcal{B}+ (1-\alpha) \mathcal{I}^T$, for $0 \geq \alpha \geq 1$. The value of $\epsilon$ or $\alpha$ should be chosen carefully, and various methods can guide this selection. These methods include the discrepancy principle, which uses information about the noise; The L-curve criterion, which plots the residual norm against the side constraint norm, and generalized cross-validation, aimed at minimizing prediction errors.\\

Note that a regularization technique is also often used in small-sample-size (SSS) scenarios, particularly when dealing with high-dimensional data, which common in images, video... One approach to addressing this issue is the exponential discriminant analysis (EDA) method, introduced in \cite{zhang2009generalized}, which eliminates the need for regularization. The key idea behind EDA is to replace the between-scatter and within-scatter matrices with their exponential counterparts. This transformation ensures that the resulting TR remains non-singular. A similar approach can be adapted, which would result in a new methods based on it.

\begin{remark}
Another method that is similar to LDA, is the Fischer Discriminant Analysis (FDA), that is used in the case of two classes, and the scatter matrices are replaced by the covariance matrices.
\end{remark}

\section{Numerical experiments}\label{sec:experiments}
To show the effectiveness and robustness of the proposed methods, we conduct experiments on several benchmark datasets widely used in the literature. Specifically, we evaluate performance on the GTDB dataset for facial recognition, the MNIST dataset for handwritten digit recognition, and the DIV dataset for multi-modal digit recognition, which includes both visual and auditory data. Unlike datasets that provide pre-processed features, these benchmarks offer raw image data—and in the case of DIV, both image and audio modalities—enabling a more thorough and realistic evaluation of our approach. Performance is assessed by projecting the data into lower-dimensional spaces using the proposed methods and applying a standard classifier, with results compared against a selection of state-of-the-art techniques. To provide a comprehensive performance comparison, we include a baseline model in which the raw data is directly used as input to the classifier. Across all methods, the recognition rate (IR) is adopted as the primary evaluation metric. The recognition rate is computed using a 1-nearest neighbor (1-NN) classifier, based on the Euclidean distance between the projected training and test samples. To ensure a fair and consistent comparison, we utilize the supervised variants of the evaluated methods. Gaussian weights are employed, with the Gaussian kernel parameter set to half the median of the dataset, as recommended in \cite{kokiopoulou2009enhanced}. All experiments were conducted using MATLAB 2021a on a standard computing platform equipped with a 2.1 GHz Intel Core i7 (8th Gen) processor and 8 GB of RAM.

\subsection{Compared methods}
We refer to the proposed methods as $MDA_e^{rt}$, for Ratio-Trace method, and $MDA_e^{tr}$ for the Trace-Ratio based on the iterative method. The subscript $_\textbf{e}$ stands for the Einstein product. We compared our methods with the following state-of-the-art methods:
\begin{itemize}
\item The LDA method.
\item Orthogonal Locality Preserving Projections $OLPP_e$, Orthogonal Neighborhood Preserving Projections $ONPP_e$ via the Einstein product \cite{zahir2024higher} that are, respectively, the generalized higher order methods of the matrix versions of (OLPP), and (ONPP).
\item $MDA_t^{tr}$ \cite{dufrenois2023multilinear} which is a multiview generalization of LDA using the T-product for third order tensors.
\end{itemize}
We have concentrated our experiments on linear methods to mitigate the out-of-sample problem and ensure a fair comparison. It’s important to note that we have utilized the supervised versions of the methods, which includes all approaches except for the unsupervised method PCA. A supervised version relies on labeled data for graph construction rather than learning it directly from the data. In the supervised methods, Gaussian weights are applied using the class labels, with the Gaussian parameter set to half the median of the dataset, as recommended in \cite{kokiopoulou2009enhanced}. 
The maximum number of iteration is set to $100$. The parameter of the regularized methods is set to $0.01$. The convergence criteria in the iterative methods is set to $1e-9$.\\
In accordance with the paper’s guidelines, methods requiring matrix inputs reshape each image of size $H \times W$ pixels (height $H$, width $W$) with three RGB channels into a vector of length $H \cdot W \cdot 3$, forming a data matrix of size $(H \times W \times 3, n)$, where $n$ is the number of samples. Conversely, for $MDA_t^{tr}$ the data is reshaped into $(n, H \times W, 3)$ to match the $t$-product format.

\subsection{Dataset}
We will use multiple datasets with varying sizes, sample counts, class distributions, and types to provide a broader perspective on the proposed method.
\begin{itemize}
\item \textbf{MNIST dataset} \footnote{\url{https://lucidar.me/en/matlab/load-mnist-database-of-handwritten-digits-in-matlab/}} consists of 60,000 training images and 10,000 testing images featuring labeled handwritten digits. Each image is formatted as a $28 \times 28$ pixel gray scale image, and all images are normalized to ensure uniform intensity level. We randomly selected 1000 images for training and 200 for testing. The data input in this case is a tensor of size $28 \times 28 \times 1000$ for the training set and $28 \times 28 \times 200$ for the testing set.

\item \textbf{GTDB dataset}\footnote{\url{https://www.anefian.com/research/face_reco.htm}} includes 750 RGB images of 50 distinct individuals. Each individual is represented by 15 images, capturing a variety of facial expressions, scales, and lighting conditions. The dataset is resized to $60 \times 60$ pixels. We randomly select 12 images per individual for training and the remaining 3 for testing. The data input in this case is a tensor of size $60 \times 60 \times 3 \times 500$ for the training set and $60 \times 60 \times 3 \times 250$ for the testing set.

\item \textbf{DIV dataset} was introduced in \cite{dufrenois2023multilinear}. This dataset covers digits from 0 to 9 using two modalities—visual and audio—drawn respectively from the MNIST and FSDD \footnote{\url{https://github.com/Jakobovski/free-spoken-digit-dataset}} datasets. MNIST provides 60,000 training and 10,000 test grayscale images of handwritten digits, each at a resolution of $28\times 28$ pixels. FSDD contains 500 audio recordings (8 kHz) of spoken English digits, each lasting on average about 0.5 seconds. The dataset is harmonized into a tensor of size $64 \times 64 \times 2 \times 5000$ where 5000 is the number of samples chosen randomly from the original two dataset. We select randomly 4000 samples for training and 1000 for testing.

\item \textbf{WDCM dataset} is a hyperspectral image of Washington DC Mall, captured by the Hyperspectral Digital Imagery Collection Experiment (HYDICE) \cite{he2015hyperspectral}. This dataset consists of 191 spectral bands and has a spatial resolution of $1208 \times 307$ pixels. Based on this image, we define four distinct land cover classes: grassland, tree, roof, and road. These classes are manually delineated using $7\times 7$ pixel blocks. In total, 8032 blocks are extracted, distributed as follows: 1894 blocks for tree, 1919 for grassland, 2616 for roof, and 1603 for road. To facilitate analysis, a tensor representation is constructed, organizing the data into a four-dimensional array of size $7\times 7 \times191 \times 8032$. This tensor structure enables more effective multilinear analysis and feature extraction. The modifications and enhancements to the dataset and its representation were inspired by the work of \cite{dufrenois2023multilinear}, which introduced refinements to the prepossessing and classification methodology.

\end{itemize}
A visual summary of the datasets is shown in the following Figure.
\begin{figure}[h]
\centering
\begin{minipage}[t]{0.5\textwidth}
\includegraphics[width=0.9\textwidth]{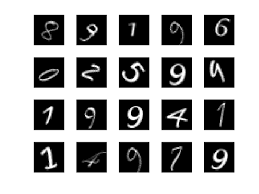}
\end{minipage}
\begin{minipage}[t]{0.3\textwidth}
\includegraphics[width=0.85\textwidth]{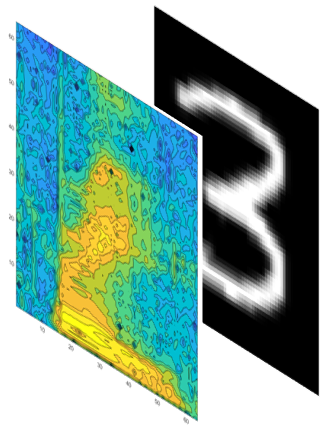}
\end{minipage}
\begin{minipage}[t]{0.65\textwidth}
\includegraphics[width=0.9\textwidth]{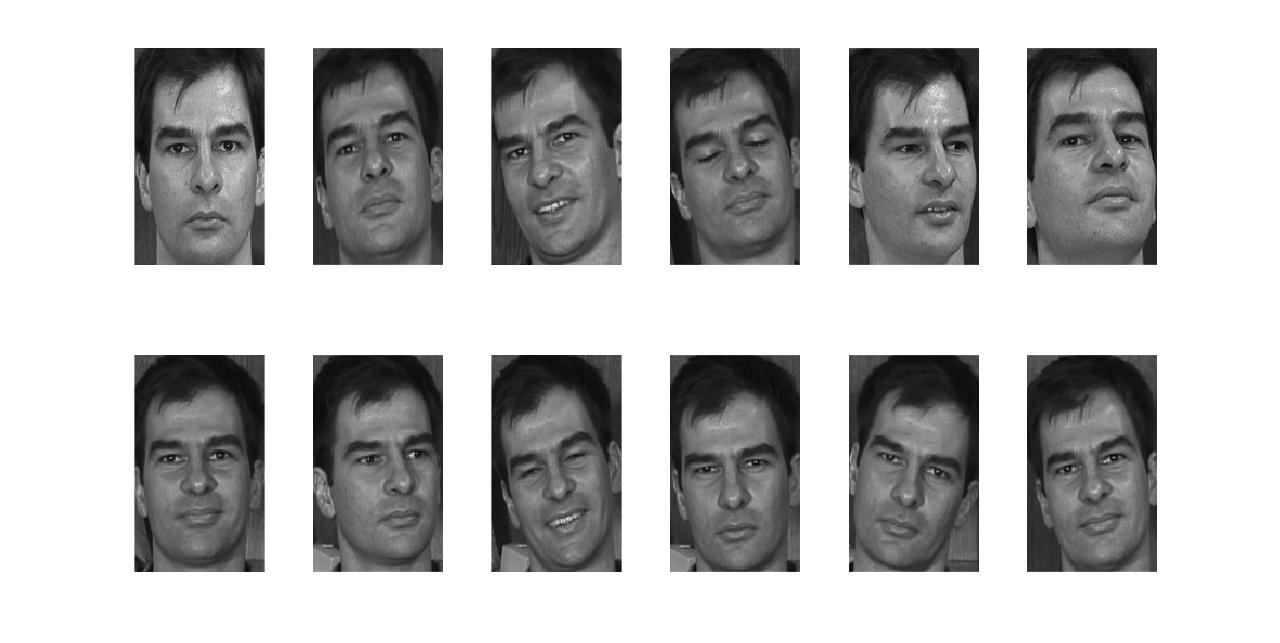}
\end{minipage}
\begin{minipage}[t]{0.29\textwidth}
\includegraphics[width=0.9\textwidth]{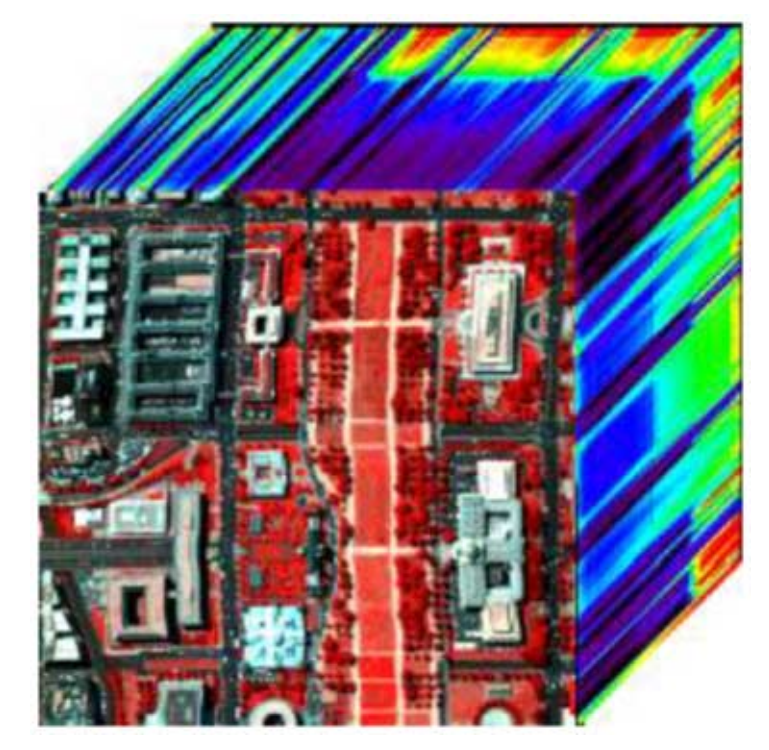}
\end{minipage}
\caption{Top left: MNIST dataset; Top right: DIV dataset; Bottom left: Random person from the GTDB dataset; Bottom right: WDCM dataset.}
\label{fig:GTDB}
\end{figure}
In the GTDB dataset, we will concentrate on comparing the methods that are based on the Tensorial format, along with LDA.
\subsection{Results}
This section presents the results of the approaches on the multiple datasets, across different subspace dimensions, as well as the associated time complexity.
The performance of the proposed method is evaluated using the recognition rate as the evaluation metric. It is defined as the ratio of correctly classified test samples to the total number of test samples. A test sample is considered correctly classified if it has the minimum distance to its corresponding training sample in the projected feature space. All recognition rates are computed on the test set.\\
Table \ref{Tab:combined_mnist_div_wdcm} report on the results of MNIST,DIV and DCM datasets, respectively. In both cases, the proposed method consistently outperforms existing approaches, confirming its effectiveness and robustness across different types of data.\\
In all tables, the best recognition rate for each subspace dimension is highlighted in \textbf{bold}, while the second best is \underline{underlined}.

\begin{table}[h]
\centering
\resizebox{\textwidth}{!}{
\begin{tabular}{c|c|cccccc}
\textbf{Dataset} & $d$ 
& \textbf{LDA} 
& $\boldsymbol{OLPP_e}$ 
& $\boldsymbol{ONPP_e}$ 
& $\boldsymbol{MDA_t^{tr}}$ 
& $\boldsymbol{MDA_e^{tr}}$ 
& $\boldsymbol{MDA_e^{rt}}$ \\
\hline
\multirow{8}{*}{MNIST}
& 5 & \textbf{75.50} & 50.50 & 56.00 & \underline{69.50} & 47.00 & \underline{69.50} \\
& 10 & 77.00 & 75.50 & \textbf{81.50} & 75.50 & 76.00 & 75.50 \\
& 15 & 77.50 & 81.00 & 80.50 & 76.50 & \textbf{82.00} & 76.50 \\
& 20 & 78.50 & 85.00 & 83.50 & 78.00 & \textbf{84.50} & 78.00 \\
& 25 & 78.00 & 86.00 & \textbf{87.50} & 75.50 & \textbf{87.50} & 75.50 \\
& 30 & 76.50 & 85.50 & \textbf{87.50} & 76.00 & \underline{86.50} & 76.00 \\
& 35 & 76.50 & 88.00 & \textbf{89.50} & 75.50 & \underline{89.00} & 75.50 \\
& 40 & 76.50 & 88.00 & \textbf{89.00} & 78.00 & \underline{88.50} & 78.00 \\
\hline
\multirow{5}{*}{DIV}
& 2 & \textbf{75.50} & 50.50 & 56.00 & \underline{69.50} & 47.00 & \underline{69.50} \\
& 4 & 77.00 & 75.50 & \textbf{81.50} & 75.50 & 76.00 & 75.50 \\
& 6 & 77.50 & 81.00 & 80.50 & 76.50 & \textbf{82.00} & 76.50 \\
& 8 & 78.50 & 85.00 & 83.50 & 78.00 & \textbf{84.50} & 78.00 \\
& 10 & 78.00 & 86.00 & \textbf{87.50} & 75.50 & \textbf{87.50} & 75.50 \\
\hline
\multirow{3}{*}{WDCM}
& 2 & \textbf{75.50} & 50.50 & 56.00 & \underline{69.50} & 47.00 & \underline{69.50} \\
& 3 & 77.00 & 75.50 & \textbf{81.50} & 75.50 & 76.00 & 75.50 \\
& 4 & 77.50 & 81.00 & 80.50 & 76.50 & \textbf{82.00} & 76.50 \\
\hline
\end{tabular}
}
\caption{Recognition rates (\%) of various methods across different target dimensions $d$ on the MNIST, DIV, and WDCM datasets (columns reordered). The baseline performance for each dataset is as follows: MNIST — 8.5\%, DIV — 26.60\%, and WDCM — 51.51\%.}
\label{Tab:combined_mnist_div_wdcm}
\end{table}

\begin{table}[h]
\centering
\resizebox{\textwidth}{!}{
\begin{tabular}{c|cccccccc}
\backslashbox{$d$}{\small{Method}} 
& $\boldsymbol{LDA}$ 
& $\boldsymbol{LDA_{r}}$ 
& $\boldsymbol{OLPP_e}$ 
& $\boldsymbol{ONPP_e}$ 
& $\boldsymbol{MDA_{t}^{tr}}$ 
& $\boldsymbol{MDA_{e}^{tr}}$ 
& $\boldsymbol{MDA_{e}^{rt}}$ 
& $\boldsymbol{MDA_{er}^{tr}}$ \\
\hline
5 & \underline{64.00} & \underline{64.00} & 38.67 & 44.00 & 42.67 & 75.33 & \textbf{76.60} & 75.33 \\
10 & 72.67 & \underline{78.00} & 56.00 & 61.33 & 48.67 & \textbf{86.00} & \textbf{86.00} & \textbf{86.00} \\
15 & 74.00 & \underline{82.00} & 60.00 & 67.33 & 60.00 & \underline{86.00} & \textbf{86.67} & \underline{86.00} \\
20 & 72.00 & \underline{84.00} & 63.33 & 69.33 & 60.00 & \textbf{88.00} & \textbf{88.00} & \textbf{88.00} \\
25 & 70.00 & \underline{84.67} & 68.67 & 72.00 & 68.67 & \textbf{88.00} & \textbf{88.00} & \textbf{88.00} \\
30 & 66.00 & \underline{84.67} & 73.33 & 74.00 & 61.33 & 85.33 & \textbf{86.00} & \textbf{86.00} \\
35 & 67.33 & \underline{84.67} & 77.33 & 75.33 & 66.67 & 87.33 & \textbf{88.00} & \textbf{88.00} \\
40 & 66.00 & \underline{86.00} & 80.00 & 76.67 & 66.67 & \underline{88.00} & \textbf{88.67} & \underline{88.00} \\
\hline
\end{tabular}}
\caption{Recognition rates (\%) on different target dimensions $d$ on the GTDB dataset. Baseline is 75.00\%.}
\label{Tab:GTDB}
\end{table}

\noindent
Table \ref{Tab:GTDB} presents the classification performance of various methods on the GTDB dataset. The results clearly demonstrate the superiority of the proposed method, which achieves a notably high recognition rate even at relatively low dimensions (starting from 
$d=10$). These findings also highlight the general advantage of higher-order methods over their lower-dimensional counterparts, particularly in capturing complex structures in the data. 
An additional observation concerns the regularization effect in LDA-based methods. While regularization significantly improves LDA performance, its impact appears to be less pronounced in higher-dimensional settings and comparatively less necessary for MDA, which already performs well without it. Moreover, as shown in the first figure, the methods exhibit stronger performance on RGB images, suggesting that they are particularly effective when applied to high-dimensional data with richer feature representations.
\medskip
\begin{table}[h]
\centering
\begin{tabular}{|c|cccc|}
\backslashbox{$d$}{Method} & $\boldsymbol{MDA_e^{rt}}$ & $\boldsymbol{MDA_e^{tr}}$ & $\boldsymbol{LDA_r^{tr}}$ & $\boldsymbol{MDA_e^{rt,ls}}$\\
\hline
5 & 57.05 & 482.66 & 16.06 & 23.07\\
10 & 50.46 & 423.79 & 16.28 & 22.79\\
\end{tabular}
\caption{Time of different proposed methods on the GTDB dataset in seconds.}
\label{Tab:GTDB_time}
\end{table}
\noindent
For generalized eigenvalue problems, complexity is $O(n^3)$ in the matrix case. In the tensor setting, operations scale with mode sizes; iterative TR requires repeated eigenvalue computations, roughly $10\times$ slower in practice. Using partial solvers (e.g., Lanczos or \texttt{eigs}) mitigates this. Thus RT is preferable for large-scale problems when runtime is critical, while TR is preferable for accuracy.\\
Table \ref{Tab:GTDB_time} reports the computational time of the various methods on the GTDB dataset. It can be observed that the dimensionality of the subspace does not significantly affect the runtime. This behavior stems from the implementation strategy, which employs the \textit{eig} function to compute the full spectrum of eigenvalues, followed by selection of the top components. In contrast, using the \textit{eigs} function—designed to compute only the largest $d$ eigenvalues—would substantially reduce execution time, especially in high-dimensional scenarios.\\
These results are consistent with the theoretical complexity of the methods. In particular, iterative algorithms incur a higher computational cost due to the need for eigenvalue decomposition at each iteration. For this dataset, convergence is typically reached within approximately 10 iterations, which explains the observed increase in computation time—roughly 10 times longer than non-iterative approaches.\\
Moreover, the findings highlight a common limitation of high-dimensional methods: the increased computational burden of eigenvalue computations. Although TR and RT methods achieve similar performance levels, they differ in runtime, which could influence the choice of method in real-world applications.\\
While the proposed methods introduce additional computational overhead, they exhibit strong capabilities in capturing low-dimensional feature representations. This presents a favorable trade-off between accuracy and runtime in contexts where precision is more critical than speed.

Finally, this work could be extended in the spirit of \cite{zhang2009generalized}, by incorporating the exponential of a tensor with respect to the Einstein product. This exponential is straightforward to define and offers a valuable advantage: it guarantees to overcome the undersampling problem. Such an extension could further enhance the robustness and applicability of the proposed framework.

\section{Conclusion}\label{sec:conclusion}
In this work, we introduced Multilinear Discriminant Analysis via the Einstein product ($MDA_e$), a novel approach for dimensionality reduction and classification. The method is formulated as a Trace Ratio optimization problem, aiming to compute a projection tensor that effectively maximizes the ratio between: the between-class and the within-class scatter. We established the existence and uniqueness of the solution (up to a unitary transformation) and proposed an iterative algorithm for its computation, incorporating a regularization term to guarantee stability and solvability. 
Extensive experiments were conducted on the MNIST, GTDB, and DIV datasets, benchmarking $MDA_e$ against state-of-the-art methods. The results demonstrate that $MDA_e$ consistently outperforms existing techniques, particularly in scenarios where computational time is not a limiting factor. Future research will explore optimization of computational efficiency and the extension of the method to broader application domains.
\\

\noindent {\bf Declaration of competing interest}
The authors declare that they have no competing financial interests or personal relationships that could have appeared to influence the work reported in this paper.

\bibliographystyle{siam}
\bibliography{cas-refs}

@article{zahir2025multilinear,
title={Multilinear algebra methods for higher-dimensional graphs},
author={Zahir, Alaeddine and Jbilou, Khalide and Ratnani, Ahmed},
journal={Applied Numerical Mathematics},
volume={208},
pages={390--407},
year={2025},
publisher={Elsevier},
keywords={mine}
}

@article{el2022tensor,
  title={Tensor Krylov subspace methods via the Einstein product with applications to image and video processing},
  author={El Guide, M and El Ichi, A and Jbilou, Khalide and Beik, FPA},
  journal={Applied Numerical Mathematics},
  volume={181},
  pages={347--363},
  year={2022},
  publisher={Elsevier}
}

@article{kilmer2008third,
  title={A third-order generalization of the matrix SVD as a product of third-order tensors},
  author={Kilmer, Misha E and Martin, Carla D and Perrone, Lisa},
  journal={Tufts University, Department of Computer Science, Tech. Rep. TR-2008-4},
  year={2008}
}

@article{zhang2009generalized,
  title={Generalized discriminant analysis: A matrix exponential approach},
  author={Zhang, Taiping and Fang, Bin and Tang, Yuan Yan and Shang, Zhaowei and Xu, Bin},
  journal={IEEE Transactions on Systems, Man, and Cybernetics, Part B (Cybernetics)},
  volume={40},
  number={1},
  pages={186--197},
  year={2009},
  publisher={IEEE}
}

@article{park2008comparison,
  title={A comparison of generalized linear discriminant analysis algorithms},
  author={Park, Cheong Hee and Park, Haesun},
  journal={Pattern Recognition},
  volume={41},
  number={3},
  pages={1083--1097},
  year={2008},
  publisher={Elsevier}
}

@article{he2015hyperspectral,
  title={Hyperspectral image denoising via noise-adjusted iterative low-rank matrix approximation},
  author={He, Wei and Zhang, Hongyan and Zhang, Liangpei and Shen, Huanfeng},
  journal={IEEE Journal of Selected Topics in Applied Earth Observations and Remote Sensing},
  volume={8},
  number={6},
  pages={3050--3061},
  year={2015},
  publisher={IEEE}
}

@inproceedings{ye2007least,
  title={Least squares linear discriminant analysis},
  author={Ye, Jieping},
  booktitle={Proceedings of the 24th international conference on Machine learning},
  pages={1087--1093},
  year={2007}
}

@inproceedings{lee2015equivalence,
  title={On the equivalence of linear discriminant analysis and least squares},
  author={Lee, Kibok and Kim, Junmo},
  booktitle={Proceedings of the AAAI Conference on Artificial Intelligence},
  volume={29},
  year={2015}
}

@article{zahir2024higher,
title = {Novel dimensionality reduction approaches for higher-order multidimensional data},
journal = {Pattern Recognition},
volume = {169},
pages = {111938},
year = {2026},
issn = {0031-3203},
doi = {https://doi.org/10.1016/j.patcog.2025.111938},
url = {https://www.sciencedirect.com/science/article/pii/S0031320325005989},
author = {Alaeddine Zahir and Khalide Jbilou and Ahmed Ratnani},
keywords = {Tensor, Dimension reduction, Einstein product, Graph-based methods, Multi-dimensional data, Trace optimization problem},
}

@article{dufrenois2023multilinear,
  title={Multilinear discriminant analysis using tensor-tensor products},
  author={Dufrenois, Franck and El Ichi, Alaa and Jbilou, Khalide},
  journal={Journal of Mathematical Modeling},
  volume={11},
  number={1},
  pages={83--101},
  year={2023},
  publisher={University of Guilan}
}

@inproceedings{wang2007trace,
  title={Trace ratio vs. ratio trace for dimensionality reduction},
  author={Wang, Huan and Yan, Shuicheng and Xu, Dong and Tang, Xiaoou and Huang, Thomas},
  booktitle={2007 IEEE Conference on Computer Vision and Pattern Recognition},
  pages={1--8},
  year={2007},
  organization={IEEE}
}

@book{fukunaga2013introduction,
  title={Introduction to statistical pattern recognition},
  author={Fukunaga, Keinosuke},
  year={2013},
  publisher={Elsevier}
}

@book{qi2017tensor,
  title={Tensor analysis: spectral theory and special tensors},
  author={Qi, Liqun and Luo, Ziyan},
  year={2017},
  publisher={SIAM}
}

@article{kokiopoulou2009enhanced,
  title={Enhanced graph-based dimensionality reduction with repulsion Laplaceans},
  author={Kokiopoulou, Effrosini and Saad, Yousef},
  journal={Pattern Recognition},
  volume={42},
  number={11},
  pages={2392--2402},
  year={2009},
  publisher={Elsevier}
}

@article{wang2022generalized,
  title={Generalized eigenvalue for even order tensors via Einstein product and its applications in multilinear control systems},
  author={Wang, Yuchao and Wei, Yimin},
  journal={Computational and Applied Mathematics},
  volume={41},
  number={8},
  pages={419},
  year={2022},
  publisher={Springer}
}

@article{Brazell2013,
  title={Solving multilinear systems via tensor inversion},
  author={Brazell, Michael and Li, Na and Navasca, Carmeliza and Tamon, Christino},
  journal={SIAM Journal on Matrix Analysis and Applications},
  volume={34},
  number={2},
  pages={542--570},
  year={2013},
  publisher={SIAM}
}

\end{document}